\newtheorem{theorem}{Theorem}[section]
\newtheorem{lemma}{Lemma}[section]
\newtheorem{corollary}[theorem]{Corollary}
\newtheorem{thm}{Theorem}[section]
\newtheorem{prop}[thm]{Proposition}
\newtheorem{defn}{Definition}[section]
\numberwithin{equation}{section}
\renewcommand{\O}{\emptyset}
\def\<{\left\langle} \def\>{\right\rangle}
\def\({\left(} \def\){\right)}
\renewcommand{\section}{\@startsection{section}{1}{0mm}
	{-\baselineskip}{0.5\baselineskip}{\bf\leftline}}
\title[]
{a new metric for statistical properties of long time behaviors}
\subjclass[2010]{Primary: 54H20; Secondary: 37A20, 37B05, 37B45.}
\keywords{Generic point, Unique ergodicity, Time average, Weak mean equicontinuity.}
\email{zhengliqi15@mails.ucas.edu.cn}
\email{zhzheng@amt.ac.cn}
\thanks{The second author is supported by the NSF of China(No. 11671382), CAS Key Project of Frontier Sciences(No. QYZDJ-SSW-JSC003), the Key Lab. of Random Complex Structures and Data Sciences CAS and National Center for Mathematics and Interdisplinary Sciences CAS}
\thanks{$^*$ Corresponding author: Zuohuan Zheng}
\begin{document}
	\maketitle
	
	\centerline{\scshape Liqi Zheng}
	\medskip
	{\footnotesize
		\centerline{Academy of Mathematics and Systems Science, Chinese Academy of Sciences}
		\centerline{Beijing 100190, China}
		\centerline{University of Chinese Academy of Sciences}
		\centerline{Beijing 100049, China}
	} 
	
	\medskip
	
	\centerline{\scshape Zuohuan Zheng$^*$}
	\medskip
	{\footnotesize
		\centerline{College of Mathematics and Statistics, Hainan Normal University}
		\centerline{Haikou, Hainan 571158, China}
		\centerline{Academy of Mathematics and Systems Science, Chinese Academy of Sciences}
		\centerline{Beijing 100190, China}
		\centerline{University of Chinese Academy of Sciences}
		\centerline{Beijing 100049, China}
	
	}
	
	\bigskip
	
	
	\begin{abstract}
		Let $(X,T)$ be a topological dynamical system with metric $d$. We define a new function $\overline{F}(x,y)=\limsup\limits_{n \to +\infty} \inf\limits_{\sigma \in S_n} \frac 1n \sum\limits_{k=1}^n d(T^k x,T^{\sigma(k)} y)$ by using permutation group $S_n$. It's shown $F(x,y)=\lim\limits_{n \to +\infty} \inf\limits_{\sigma \in S_n} \frac 1n \sum\limits_{k=1}^n d(T^k x,T^{\sigma(k)} y)$ exists when $x,y \in X$ are generic points.
		Applying this function, we prove $(X,T)$ is uniquely ergodic if and only if $\overline{F}(x,y)=0$ for any $x,y \in X$. The characterizations of ergodic measures and physical measures by $\overline{F}(x,y)$ are given. We introduce the notion of weak mean equicontinuity and prove that $(X,T)$ is weak mean equicontinuous if and only if the time averages $f^{*}(x)=\lim\limits_{n \to +\infty}\frac 1n \sum\limits_{k=1}^n f(T^k x)$ are continuous for all $f \in C(X)$.
	\end{abstract}

	\section{Introduction}
	
	Throughout this paper, a \emph{topological dynamical system} (for short t.d.s.) is a pair $(X,T)$, where $X$ is a non-empty compact metric space with a metric $d$ and $T$ is a continuous map from $X$ to itself.
	
	When studying long time behaviors, people firstly focused on equicontinuous systems, because they have simple dynamical behaviors \cite{EJK,PJ}. But only the cumulative effect of points in orbits can influence statistical properties of long time behaviors, so it is reasonable to ignore where positions are for some points in orbits when studying statistical properties of long time behaviors. For this purpose, \emph{mean-L-stable systems} were introduced \cite{S,JA,JCO}. We call a dynamical system $(X,T)$ \emph{mean-L-stable} if for any $\varepsilon >0$, there is a $\delta>0$ such that $d(x,y)<\delta$ implies $d(T^n x,T^n y)<\varepsilon$ for all $n \in \mathbb{N}$ except a set of upper density(see Section  \ref{2} for definition) less than $\varepsilon$. Recently, Li, Tu and Ye~\cite{JSX} introduced \emph{mean equicontinuous systems}. A dynamic system is called \emph{mean equicontinuous} if for any $\varepsilon>0$, there exists a $\delta>0$ such that whenever $x,y \in X$ with $d(x,y)<\delta$,
	\[
	\limsup_{n \to +\infty} \frac 1n \sum_{k=0}^{n-1}d(T^k x,T^k y) < \varepsilon .
	\]
	In their paper, they proved that a dynamic system is mean equicontinuous if and only if it is mean-L-stable. We refer to \cite{FG1,FG2,FGL,WJLX,JL} for further study on mean equicontinuity and related subjects.
	
	The highlight in this paper is to ignore the order of points in orbits, for the order makes no sense when studying statistical properties of long time behaviors. In order to state our idea precisely, we introduce some new notations. For any $x,y \in X$, we define
	\[
	\overline{F}(x,y)=\limsup_{n \to + \infty} \inf_{\sigma \in S_n} \frac 1n \sum_{k=1}^n d(T^k x,T^{\sigma(k)} y) ,
	\]
	\[
	\underline{F}(x,y)=\liminf_{n \to + \infty} \inf_{\sigma \in S_n} \frac 1n \sum_{k=1}^n d(T^k x,T^{\sigma(k)} y) ,
	\]
	\[
	N(\overline{F}) = \{(x,y) \in X \times X|\overline{F}(x,y)=0\} ,
	\]
	\[
	N(\underline{F}) = \{(x,y) \in X \times X|\underline{F}(x,y)=0\} ,
	\]
	where $S_n$ is the n-order permutation group. If $\overline{F}(x,y)=\underline{F}(x,y)$, we say $F(x,y)$ exists, and define 
	\[
	F(x,y)=\lim_{n \to + \infty} \inf_{\sigma \in S_n} \frac 1n \sum_{k=1}^n d(T^k x,T^{\sigma(k)} y) ,
	\]
	\[
	N(F) = \{(x,y) \in X \times X|F(x,y)=0\} .
	\]
	It is easy to obtain that
	\[
	N(\overline{F})=N(F) \subset N(\underline{F}) .
	\]
	
	$\overline{F}(x,y)$ and $\underline{F}(x,y)$ are functions which can measure the difference between distributions of $\text{Orb}(x)$ and $\text{Orb}(y)$, where $\text{Orb}(x) = \{x, T x, T^2 x, \cdots \}$ and
	$\text{Orb}(y)  = \{y, T y, T^2 y, \cdots \}$ are the orbits of $x$ and $y$ respectively. When $x$ and $y$ are generic points(see Section  \ref{2} for definition), we can deduce that $\overline{F}(x,y)= \underline{F}(x,y)$.
	\begin{theorem}\label{1.2}
		Let $(X,T)$ be a t.d.s. If $x,y \in X$ are generic points, then $F(x,y)$ exists.
	\end{theorem}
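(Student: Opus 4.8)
The plan is to recognize the combinatorial quantity
\[
D_n(x,y):=\inf_{\sigma\in S_n}\frac1n\sum_{k=1}^n d(T^kx,T^{\sigma(k)}y)
\]
as the Kantorovich--Wasserstein distance $W_1$ between the two empirical measures
\[
\mu_x^n=\frac1n\sum_{k=1}^n\delta_{T^kx},\qquad \mu_y^n=\frac1n\sum_{k=1}^n\delta_{T^ky},
\]
and then to pass to the limit by exploiting that $x$ and $y$ are generic. The point is that $D_n$ is purely a statement about the \emph{distributions} of the two orbit segments, which is exactly what genericity controls.

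First I would establish the identification $D_n(x,y)=W_1(\mu_x^n,\mu_y^n)$. Treating the atoms as labeled, any coupling of $\mu_x^n$ with $\mu_y^n$ corresponds to an $n\times n$ doubly stochastic matrix (after scaling its entries by $n$), and the transport cost $\sum_{k,j}\pi_{kj}\,d(T^kx,T^jy)$ is linear in this matrix. By the Birkhoff--von Neumann theorem the extreme points of the doubly stochastic polytope are precisely the permutation matrices, so the linear objective attains its minimum at some permutation; this gives the identity, regardless of whether the orbit points are distinct. Next I would invoke two standard facts about $W_1$ on the compact metric space $X$: it obeys the triangle inequality, and it metrizes the weak$^*$ topology on the space of Borel probability measures. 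Since $x$ is generic, $\mu_x^n$ converges weak$^*$ to an invariant measure $\mu_x$, whence $W_1(\mu_x^n,\mu_x)\to0$; likewise $W_1(\mu_y^n,\mu_y)\to0$. The triangle inequality then yields
\[
\bigl|\,W_1(\mu_x^n,\mu_y^n)-W_1(\mu_x,\mu_y)\,\bigr|\le W_1(\mu_x^n,\mu_x)+W_1(\mu_y^n,\mu_y)\longrightarrow0,
\]
so $D_n(x,y)=W_1(\mu_x^n,\mu_y^n)$ converges to $W_1(\mu_x,\mu_y)$. In particular $\overline F(x,y)=\underline F(x,y)$, i.e. $F(x,y)$ exists.

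The main obstacle is the first step, the identification of the assignment infimum with $W_1$: once that is in hand the rest is a soft limiting argument. A minor bookkeeping point also needs care, namely that genericity is usually phrased for the averages $\frac1n\sum_{k=0}^{n-1}$ while here the sum runs from $k=1$ to $n$; however the corresponding empirical measures differ in a single atom, so their $W_1$ distance is at most $\tfrac1n\operatorname{diam}(X)\to0$ and the limiting measure is unaffected. If one prefers to keep the argument self-contained and avoid quoting optimal-transport theory, both the identity $D_n=W_1$ and the metrization of weak$^*$ convergence can be reproved by hand on a compact space using finite $\varepsilon$-nets together with Kantorovich--Rubinstein duality, but citing these facts directly is the cleanest route.
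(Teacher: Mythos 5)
Your proof is correct, and it takes a genuinely different and considerably shorter route than the paper. The paper argues by contradiction with a hands-on combinatorial construction: assuming $\overline F(x,y)-\underline F(x,y)=\alpha>0$, it partitions $X$ into small-diameter pieces of controlled $\mu_x$- and $\mu_y$-measure, builds finite model sequences $\{\overline x_i\},\{\overline y_i\}$, and proves matching upper and lower bounds (Claims 1 and 2) showing that $\inf_{\sigma\in S_n}\frac1n\sum_k d(T^kx,T^{\sigma(k)}y)$ is pinned, up to errors of order $\varepsilon$, to a single number $(a-\beta)\inf_{\sigma\in S_K}\sum_k d(\overline x_k,\overline y_{\sigma(k)})$, forcing $\overline F-\underline F\le\alpha/2$. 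You instead identify $\inf_{\sigma\in S_n}\frac1n\sum_k d(T^kx,T^{\sigma(k)}y)$ with $W_1(\mu_x^n,\mu_y^n)$ via Birkhoff--von Neumann (the splitting of coincident atoms you mention is the only bookkeeping point, and it is handled correctly since the cost depends only on positions), and then use that $W_1$ metrizes weak$^*$ convergence on a compact space together with the triangle inequality; the indexing concern about $k=0$ versus $k=1$ is moot here since the paper's own definition of generic point already uses sums from $k=1$ to $n$. What your approach buys is substantial: besides brevity, it identifies the limit explicitly as $F(x,y)=W_1(\mu_x,\mu_y)$, which makes several downstream results of the paper (e.g.\ that $F(x,y)=0$ iff $M_x=M_y$ for generic points, and the unique ergodicity criterion) immediate corollaries rather than separate arguments. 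What the paper's approach buys is self-containedness: it uses no optimal-transport machinery (no Birkhoff--von Neumann, no Kantorovich--Rubinstein duality, no metrization theorem), at the cost of a long and delicate estimate. If you wanted to keep your write-up fully self-contained you would need to supply proofs of the two quoted facts, which is essentially the elementary work the paper does; as a proof strategy, though, yours is sound and arguably the more illuminating one.
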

	
	In \cite{S}, Fomin proved that a minimal mean-L-stable system is uniquely ergodic. And then in \cite{JCO}, Oxtoby proved a more general result that each transitive mean-L-stable system is uniquely ergodic. In our paper, we shall give new characterizations of unique ergodicity by $F(x, y)$ and $\underline{F}(x,y)$.
	\begin{theorem}\label{1.3}
		Let $(X,T)$ be a t.d.s. Then the following statements are equivalent:
		\begin{itemize}
			\item [(\emph{1})]
			$(X,T)$ is uniquely ergodic;
			\item [(\emph{2})]
			$N(F)= X \times X$;
			\item [(\emph{3})]
			$N(\underline{F})= X \times X$.
		\end{itemize}
	\end{theorem}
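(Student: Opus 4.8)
The plan is to recognize the quantity $\inf_{\sigma\in S_n}\frac1n\sum_{k=1}^n d(T^kx,T^{\sigma(k)}y)$ as an optimal transport cost between the empirical measures of the two orbits. Writing $\mu^x_n=\frac1n\sum_{k=1}^n\delta_{T^kx}$ and $\mu^y_n=\frac1n\sum_{k=1}^n\delta_{T^ky}$, a coupling of $\mu^x_n$ with $\mu^y_n$ corresponds (after labelling the atoms) to an $n\times n$ doubly stochastic matrix; since the transport functional is linear in the coupling, the Birkhoff--von Neumann theorem shows the infimum over couplings is attained at a permutation matrix, so
\[
\inf_{\sigma\in S_n}\frac1n\sum_{k=1}^n d(T^kx,T^{\sigma(k)}y)=W(\mu^x_n,\mu^y_n),
\]
where $W$ denotes the Kantorovich--Wasserstein distance associated with $d$. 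The whole argument then rests on one analytic fact: on the compact space $X$, $W$ is a genuine metric on Borel probability measures which metrizes weak$^*$ convergence.

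Granting this reformulation, the implication $(1)\Rightarrow(2)$ is immediate. If $(X,T)$ is uniquely ergodic with invariant measure $\mu$, then for every $z\in X$ one has $\mu^z_n\to\mu$ weak$^*$, hence $W(\mu^z_n,\mu)\to0$; the triangle inequality $W(\mu^x_n,\mu^y_n)\le W(\mu^x_n,\mu)+W(\mu,\mu^y_n)$ then forces $\overline{F}(x,y)=\limsup_n W(\mu^x_n,\mu^y_n)=0$ for all $x,y$, so $(x,y)\in N(\overline{F})=N(F)$. The implication $(2)\Rightarrow(3)$ is nothing but the already-noted inclusion $N(F)\subset N(\underline{F})$.

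The substantive direction is $(3)\Rightarrow(1)$, which I would argue by contraposition. Suppose $(X,T)$ is not uniquely ergodic. Then its nonempty, compact, convex set of invariant measures has at least two distinct extreme points, i.e.\ there exist two distinct ergodic measures $\mu_1\neq\mu_2$. By Birkhoff's ergodic theorem I may choose a point $x$ generic for $\mu_1$ and a point $y$ generic for $\mu_2$, so that $\mu^x_n\to\mu_1$ and $\mu^y_n\to\mu_2$ weak$^*$. By Theorem \ref{1.2}, $F(x,y)$ exists, and the metrization property gives $F(x,y)=\lim_n W(\mu^x_n,\mu^y_n)=W(\mu_1,\mu_2)$. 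Since $W$ is a metric and $\mu_1\neq\mu_2$, this value is strictly positive, whence $\underline{F}(x,y)=F(x,y)>0$ and $(x,y)\notin N(\underline{F})$, contradicting $(3)$.

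I expect the main obstacle to lie in the transport reformulation and, more precisely, in justifying that $W(\mu^x_n,\mu^y_n)\to W(\mu_1,\mu_2)$ follows from weak$^*$ convergence of the empirical measures; this rests on the fact that weak$^*$ convergence implies $W$-convergence on a compact (hence bounded) metric space, combined with the Birkhoff--von Neumann identification above. Once these two facts are in place, the three implications are short, and the only dynamical input needed is the existence of generic points for two distinct ergodic measures, supplied by the ergodic theorem.
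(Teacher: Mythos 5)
Your proof is correct, but it travels a genuinely different technical road from the paper's. You identify $\inf_{\sigma\in S_n}\frac1n\sum_{k=1}^n d(T^kx,T^{\sigma(k)}y)$ with the Kantorovich--Wasserstein distance $W(\mu^x_n,\mu^y_n)$ between empirical measures via Birkhoff--von Neumann, and then reduce all three implications to the single fact that $W$ metrizes weak$^*$ convergence on the compact space $X$: unique ergodicity gives $\mu^x_n,\mu^y_n\to\mu$ and hence $W(\mu^x_n,\mu^y_n)\to0$, while two distinct ergodic measures yield generic points with $\lim_n W(\mu^x_n,\mu^y_n)=W(\mu_1,\mu_2)>0$, so even $\underline F(x,y)>0$. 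The paper never invokes optimal transport; it proves the same underlying dichotomy by hand. Its Proposition 5.1 shows $F(x,y)=0\Rightarrow M_x=M_y$ by a direct $\varepsilon$--$\delta$ estimate on Birkhoff averages of continuous functions, Proposition 5.2 proves the converse for generic $x$ by partitioning $X$ into small sets of nearly full measure and constructing an explicit near-optimal permutation (Lemma 2.2 and Proposition 3.4), and Proposition 5.5 gives the analogous statement $\underline F(x,y)=0\Rightarrow M_x\cap M_y\ne\emptyset$ used for $(3)\Rightarrow(1)$. Your route is shorter and conceptually cleaner, at the price of importing two standard but nontrivial external facts (the Birkhoff--von Neumann theorem, with the minor bookkeeping needed when orbit points repeat, and Kantorovich--Rubinstein duality); the paper's route is longer but entirely self-contained and produces the quantitative matching estimates (Proposition 3.4) that it reuses later for the ergodicity, physical-measure, and weak mean equicontinuity results. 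As a bonus, your argument for $(3)\Rightarrow(1)$ does not actually need Theorem 1.2, since joint continuity of $W$ under weak$^*$ convergence already gives existence of the limit for the two generic points you construct.
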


	In the study of invariant measures, the set $N(F)$ can play an important role. We derive that a invariant measure $\mu$ is ergodic if and only if $(\mu \times \mu)\big(N(F) \big)=1$. In the last few decades, physical  measures is a hot topic of invariant measures. We find out $(X,T)$ has physical measures(see Section  \ref{2} for definition) with respect to $m \in M(X)$ if and only if $(m \times m) \big(N(F) \bigcap (Q \times Q) \big)>0$, where $M(X)$ is the set of all regular Borel probability measures of $X$ and $Q$ is the set of all generic points. With respect to $N(\underline{F})$, we obtain the same results.       
	
	In order to make clear statement of our results, we introduce the following two notions.
	\begin{defn}
		Let $(X,T)$ be a t.d.s. We say $(X,T)$ is $\overline{F}$-continuous at $x \in X$ if for any $\varepsilon >0$, there is a $\delta >0$ such that whenever $d(x,y)<\delta$, we have $\overline{F}(x,y)<\varepsilon$. Denote by $C(\overline{F})$ all $\overline{F}$-continuous points. If $C(\overline{F}) = X$, we say $(X,T)$ is $\overline{F}$-continuous. In this case, we also call $(X,T)$ weak mean equicontinuous.
	\end{defn}
	
	\begin{defn}
		Let $(X,T)$ be a t.d.s. We say $(X,T)$ is F-continuous at $x \in X$ if for any $\varepsilon >0$, there is a $\delta >0$ such that whenever $d(x,y)<\delta$, $F(x,y)$ exists and $F(x,y)<\varepsilon$. Denote by $C(F)$ all F-continuous points. If $C(F) = X$, we say $(X,T)$ is $F$-continuous. 
	\end{defn}
    Since $X$ is compact, it is easy to deduce that $(X,T)$ is $\overline{F}$-continuous if and only if for any $\varepsilon >0$, there is a $\delta > 0$ such that whenever $x,y \in X$ with $d(x,y)<\delta$, we have $\overline{F}(x,y)<\varepsilon$. Similarly, we can derive that $(X,T)$ is $F$-continuous if and only if for any $\varepsilon >0$, there is a $\delta > 0$ such that whenever $x,y \in X$ with $d(x,y)<\delta$, $F(x,y)$ exists and $F(x,y)<\varepsilon$.
    
   Obviously, mean equicontinuity implies weak mean equicontinuity. But in general, weak mean equicontinuity does not imply mean equicontinuity. The following example is a weak mean equicontinuous but not mean equicontinuous system.
    \[
    T: S^1 \longrightarrow S^1
    \]
    \begin{equation*}
    T(x)=\begin{cases}
    1-2(x-\frac 12)^2,  &x\in[0,\frac 12) \\
    \frac 12-2(x-1)^2,  &x\in[\frac 12,1)
    \end{cases} .
    \end{equation*}
    For any $x,y \in S^1$, $F(x,y)=0$, so $(S^1,T)$ is weak mean equicontinuous. But $0$ and $\frac 12$ are not mean equicontinuous points, which shows $(S^1,T)$ is not mean equicontinuous.
    
    On the one hand, $F$-continuity implies $\overline{F}$-continuity. On the other hand, we can  prove in an $\overline{F}$-continuous system $(X,T)$, all the points are generic points.
    Combining this with Theorem~\ref{1.2}, we deduce that an $\overline{F}$-continuous t.d.s is $F$-continuous. Hence, $F$-continuity is equivalent to $\overline{F}$-continuity. We conclude the relations as follows:
    
    \medskip
	\centerline{equicontinuity $\Rightarrow$ mean equicontinuity $\Rightarrow$ weak mean equicontinuity $\Leftrightarrow$ $F$-continuity.}
	\medskip
	
	We say a system is chaotic if the positions of points in orbits are sensitive to initial values. While weak mean equicontinuous systems may be chaotic, but in the view of measure theory, they are stable, for the distributions of points in orbits are not sensitive to initial values.
	
	Birkhoff Ergodic Theorem shows that for any integrable function $f$, the time average $f^*$
	is also integrable. It is natural to ask in which case the time average operator can preserve continuity of observe functions? In~\cite{JA}, Auslander shows in a mean-L-stable system, the time average operator maps continuous functions to continuous ones. In our paper, we will show that weak mean equicontinuous systems are exact the systems in which the time average operator can preserve continuity of observe functions.
	\begin{theorem}\label{1.5}
		Let $(X,T)$ be a t.d.s. Then $(X,T)$ is weak mean equicontinuous if and only if the time averages $f^{*}$ are continuous for all $f \in C(X)$.
	\end{theorem}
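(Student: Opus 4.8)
The plan is to treat the two implications separately, using the Kantorovich--Wasserstein metric $W$ on $M(X)$ as the bridge between $\overline{F}$ and the time averages. Recall that for a generic point $x$ the empirical measures $\mu_x^{(n)}=\frac1n\sum_{k=1}^n\delta_{T^kx}$ converge weakly-$*$ to some $\mu_x\in M(X)$ with $\int f\,d\mu_x=f^*(x)$. First I would record the identity $\overline F(x,y)=W(\mu_x,\mu_y)$ for generic $x,y$: the quantity $\inf_{\sigma\in S_n}\frac1n\sum_{k=1}^n d(T^kx,T^{\sigma(k)}y)$ is exactly the optimal matching cost between the $n$-point empirical measures, hence equals $W(\mu_x^{(n)},\mu_y^{(n)})$; since $W$ metrizes weak-$*$ convergence on the compact space $X$ and $\mu_x^{(n)}\to\mu_x$, $\mu_y^{(n)}\to\mu_y$, letting $n\to\infty$ gives $\overline F(x,y)=\underline F(x,y)=W(\mu_x,\mu_y)$, consistent with Theorem~\ref{1.2}.

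For the forward implication, assume $(X,T)$ is weak mean equicontinuous. As noted before the statement, every point is then generic, so $f^*$ exists for every $f\in C(X)$; it remains to prove continuity. For an $L$-Lipschitz $f$ and any $\sigma\in S_n$, reindexing the $y$-sum by the bijection $\sigma$ gives $\frac1n\sum_{k=1}^n f(T^kx)-\frac1n\sum_{k=1}^n f(T^ky)=\frac1n\sum_{k=1}^n\big(f(T^kx)-f(T^{\sigma(k)}y)\big)$, whose modulus is at most $\frac{L}{n}\sum_{k=1}^n d(T^kx,T^{\sigma(k)}y)$; taking the infimum over $\sigma$ and then $\limsup_n$ yields $|f^*(x)-f^*(y)|\le L\,\overline F(x,y)$. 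Thus weak mean equicontinuity forces $f^*$ to be continuous for every Lipschitz $f$. To pass to arbitrary $f\in C(X)$ I would use that Lipschitz functions are dense in $C(X)$ together with the pointwise bound $|h^*(x)|\le\|h\|_\infty$, which gives $\|f^*-g^*\|_\infty\le\|f-g\|_\infty$; hence $f^*$ is a uniform limit of continuous functions and is therefore continuous.

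For the converse, assume every $f^*$ exists and is continuous. Existence for all $f\in C(X)$ means $f\mapsto f^*(x)$ is a positive linear functional of norm one, so by the Riesz representation theorem every point is generic. Suppose, for contradiction, that $(X,T)$ is not weak mean equicontinuous: there are $\varepsilon_0>0$ and points $x_n,y_n$ with $d(x_n,y_n)\to0$ but $\overline F(x_n,y_n)\ge\varepsilon_0$. By compactness of $X$ pass to a subsequence with $x_n\to z$, whence $y_n\to z$. Continuity of every $f^*$ gives $\int f\,d\mu_{x_n}=f^*(x_n)\to f^*(z)=\int f\,d\mu_z$ and likewise $\int f\,d\mu_{y_n}\to\int f\,d\mu_z$, so $\mu_{x_n}\to\mu_z$ and $\mu_{y_n}\to\mu_z$ weakly-$*$. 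By the identity above and the triangle inequality, $\overline F(x_n,y_n)=W(\mu_{x_n},\mu_{y_n})\le W(\mu_{x_n},\mu_z)+W(\mu_z,\mu_{y_n})\to0$, contradicting $\overline F(x_n,y_n)\ge\varepsilon_0$.

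The step I expect to be the main obstacle is the converse, specifically upgrading the pointwise continuity of each individual $f^*$ to the uniform (over all Lipschitz observables at once) control that $\overline F$ encodes; the resolution is precisely the identification $\overline F=W$ together with the fact that $W$ metrizes weak-$*$ convergence, which converts the infinitely many scalar statements ``$f^*$ is continuous'' into the single measure-valued statement ``$x\mapsto\mu_x$ is weak-$*$ continuous''. Establishing that $W$ metrizes the weak-$*$ topology on $M(X)$ for compact $X$, and that the permutation infimum is genuinely the Wasserstein cost of the empirical measures, is the technical heart I would need to nail down carefully.
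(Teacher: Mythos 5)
Your proposal is correct, but it reaches the conclusion by a genuinely different route from the paper, most visibly in the converse direction. The pivot of your argument --- identifying $\inf_{\sigma\in S_n}\frac1n\sum_{k=1}^n d(T^kx,T^{\sigma(k)}y)$ with the Kantorovich--Wasserstein cost between the $n$-point empirical measures via Birkhoff--von Neumann, hence $\overline F(x,y)=W(\mu_x,\mu_y)$ for generic $x,y$ --- appears nowhere in the paper (it would in fact yield an independent proof of Theorem~\ref{1.2}). With that identity the converse becomes soft: continuity of every $f^*$ says exactly that $x\mapsto\mu_x$ is weak-$*$ continuous, and since $W$ metrizes the weak-$*$ topology on $M(X)$ for compact $X$, the triangle inequality kills $\overline F(x_n,y_n)$. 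The paper instead argues by hand: assuming $\overline F$-continuity fails at $x$ with witnesses $x_m\to x$, it partitions $X$ into cells of small diameter carrying most of $\mu_x$, shows (Claim 3, via Lemma~\ref{lemma2.1} and Proposition~\ref{P3.4}) that some cell must be visited by $x$ noticeably more often than its slight enlargement is visited by $x_m$ along a subsequence, and then builds one bump function $f$ whose time average jumps, contradicting continuity of that single $f^*$. Your forward direction is essentially the paper's, except that you prove $|f^*(x)-f^*(y)|\le L\,\overline F(x,y)$ only for $L$-Lipschitz $f$ and then pass to general $f\in C(X)$ by density together with $\|f^*-g^*\|_\infty\le\|f-g\|_\infty$, whereas the paper treats a general continuous $f$ directly by a uniform-continuity/Markov splitting of the indices; both you and the paper invoke Proposition~\ref{Theorem5} for existence of $f^*$. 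What your approach buys is brevity and a conceptual explanation of what $\overline F$ is; what it costs is self-containedness, since you must import two standard optimal-transport facts (the permutation infimum equals the transport cost of empirical measures because the linear cost on the Birkhoff polytope is minimized at a permutation matrix, and $W_1$ metrizes weak-$*$ convergence on a compact space). Both are true and standard, so there is no gap, but they are the steps you should write out carefully.
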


   We organize this paper as follows. In Section \ref{2} we introduce some basic notions and results needed in the paper. In Section \ref{3} we show some propositions of $\overline{F}(x,y)$ and $N(F)$ which are useful in the sequel. In Section \ref{4} we prove Theorem \ref{1.2}. In Section \ref{5} we study invariant measures by $\overline{F}(x,y)$ and $F(x,y)$, and prove Theorem \ref{1.3}. Meanwhile new characterizations of ergodic measures and physical measures are given. In Section \ref{6} we introduce weak mean equicontinuous systems and provide the proof of Theorem \ref{1.5}.
	
   \medskip
   
   \noindent{\bf Acknowledgments.} We would like to express our deep gratitude to Professor Wen Huang for his valuable comments and suggestions. We also thank Weisheng Wu and Qianying Xiao very much for their valuable advice.
		
	\section{Preliminaries}\label{2}
	In this section we recall some notions and results of topological dynamical systems which are needed in our paper. Note that $\mathbb{N}$ denotes the set of all non-negative integers and $\mathbb{N}^+$ denotes the set of all positive integers in this paper.
	\subsection{ Density}
	Let $F \subset \mathbb{N}$, we define the upper density $\overline{D}(F)$ of $F$ by 
	\[
	\overline{D}(F)=\limsup_{n \to +\infty} \frac {\#(F\cap[0,n-1])}{n} ,
	\]
	where $\#(\cdot)$ is the number of elements of a set. Similarly, the lower density $\underline{D}(F)$ of $F$ is defined by
	\[
	\underline{D}(F)=\liminf_{n \to +\infty} \frac {\#(F\cap[0,n-1])}{n} .
	\]
	One may say $F$ has density $D(F)$ if $\overline{D}(F)=\underline{D}(F)$, in which case $D(F)$ is equal to this common value.
	
	\subsection{Invariant measures}
	Suppose $(X,T)$ is a t.d.s. The $\sigma$-algebra of Borel subsets of $X$ will be denoted by $\mathscr{B}(X)$. Let $M(X)$ be the collection of all regular Borel probability measures defined on the measurable space $\big(X, \mathscr{B}(X) \big)$. In the weak$^*$ topology, $M(X)$ is a nonempty compact set (see for example \cite{PW}, Theorem 6.5). 
	
	We say $\mu \in M(X)$ is $T$-invariant if $\mu \big(T^{-1} (A) \big) = \mu (A)$ holds for any $A \in \mathscr{B}(X)$. Denote by $M(X,T)$ the collection of all $T$-invarant regular Borel probability measures defined on the measurable space $\big(X, \mathscr{B}(X) \big)$. In the weak$^*$ topology, $M(X,T)$ is a nonempty compact convex set (see for example \cite{PW}, Corollary 6.9.1, Theorem 6.10).
	
	We say $\mu \in M(X,T)$ is ergodic if for any $A \in \mathscr{B}(X)$ with $T^{-1} A = A$, $\mu (A) = 0$ or $\mu(A) = 1$ holds. Denote by $M^e (X,T)$ the collection of all ergodic measures on $(X,T)$. It is well known that $M^e (X,T)$ is the collection of all extreme points of $M(X,T)$ (see for example \cite{PW}, Theorem 6.10). Thus, $M^e (X,T)$ is nonempty.
	
	We say $(X,T)$ is uniquely ergodic if $M^e (X,T)$ is singleton. Since $M^e (X,T)$ is the set of extreme points of $M(X,T)$, then $(X,T)$ is unique ergodic if and only if $M(X,T)$ is singleton. 

    Given $x \in X$, we have $\{\frac 1n\sum\limits_{k=1}^n \delta_{T^k x}\}_{n=1}^{\infty} \subset M(X)$, where $\delta_y$ is the Dirac measure supported on $y$. Denote by $M_x$ the collection of all limit points of $\{\frac 1n\sum\limits_{k=1}^n \delta_{T^k x}\}_{n=1}^{\infty}$. Since $M(X)$ is compact, we have $M_x \not = \O$. Moreover, $M_x \subset M(X,T)$. We call $M_x$ the measure set generated by $x$.
		
	A point $x \in X$ is called generic point if for any $f \in C(X)$, the time average
	\[
	f^{*}(x)= \lim_{n \to +\infty}\frac 1n \sum_{k=1}^n f(T^k x)
	\]
	exists. It is easy to derive that $x$ is a generic point if and only if $M_x$ consists of a single measure. We call $\mu \in M_x$ is generated by $x$ if $x$ is a generic point. It is well known that when $\mu$ is an ergodic measure, there is a generic point $x \in X$ such that $\mu$ is generated by $x$ (see for example \cite{PW}, Lemma 6.13). We call a generic point $x$ is an ergodic point if the invariant measure generated by $x$ is ergodic.
	
	A Borel subset $E \subset X$ is said to have invariant measure one if $\mu(E)=1$ for all $\mu \in M(X,T)$. Let $Q$ denote the set of all generic points, and $Q_T$ denote the set of all ergodic points. In~\cite{JCO}, Oxtoby proved that $Q$ and $Q_T$ are both Borel sets of invariant measure one.
	
	Next, we define physical measures in a general way.
	\begin{defn}
		Let $(X,T)$ be a t.d.s. and $m \in M(X)$. We say $\mu \in M(X,T)$ is a physical measure with respect to $m$ if $m \big(B(\mu) \big)>0$, where 
		\[
		\text{B}(\mu)=\{x \in X| \lim_{n \to +\infty} \frac 1n \sum_{k=1}^n \delta_{T^k x} = \mu \}.
		\]
	\end{defn}
	
	For any $A \subset X$,
	let
	\begin{equation*} 
	\chi_A(x) = \begin{cases}
	1, & x \in A \\
	0, &x \notin A
	\end{cases} .
	\end{equation*}
   The  following Lemma is well known (see for example \cite{PW}, Remarks on page:149).
	\begin{lemma}\label{lemma2.1}
		 Let $(X,T)$ be a t.d.s. If $x \in X$ is a generic point and $\mu$ is generated by $x$, then for any open set $U \subset X$ and any closed set $V \subset X$, we have 
		 \begin{equation*}
		 \liminf_{n \to +\infty} \frac1n \sum_{k=1}^n \chi_U(T^k x)  \geq \mu(U) \ \ \text{and} \ \ \limsup_{n \to +\infty} \frac1n \sum_{k=1}^n \chi_V(T^k x)  \leq \mu(V).
		 \end{equation*}
	\end{lemma}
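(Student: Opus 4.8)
The plan is to reduce both inequalities to the defining property of a generic point, namely that for every $f \in C(X)$ the limit $f^*(x) = \lim_{n \to +\infty} \frac1n \sum_{k=1}^n f(T^k x)$ exists, together with the fact that this limit coincides with $\int_X f \, d\mu$. The latter is exactly the statement that $\frac1n \sum_{k=1}^n \delta_{T^k x}$ converges in the weak$^*$ topology to the unique measure $\mu \in M_x$; since $x$ is generic, $M_x$ is a singleton and this convergence is automatic. So the only work left is to pass from continuous test functions to indicator functions of open and closed sets, and the bridge for that is the regularity of $\mu$ as a Borel probability measure on the compact metric space $X$.

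For the open set $U$, I would first use inner regularity together with Urysohn's lemma: given $\varepsilon > 0$, choose a continuous function $f \in C(X)$ with $0 \le f \le \chi_U$ (so that $f$ vanishes outside $U$) and $\int_X f \, d\mu > \mu(U) - \varepsilon$. Such an $f$ exists because $\mu(U) = \sup\{\int_X f \, d\mu : f \in C(X),\ 0 \le f \le \chi_U\}$, a standard consequence of regularity on a metric space. Since $f \le \chi_U$ pointwise, we have $\frac1n \sum_{k=1}^n f(T^k x) \le \frac1n \sum_{k=1}^n \chi_U(T^k x)$ for every $n$, and taking $\liminf$ gives
\[
\liminf_{n \to +\infty} \frac1n \sum_{k=1}^n \chi_U(T^k x) \ \ge\ \lim_{n \to +\infty} \frac1n \sum_{k=1}^n f(T^k x) \ = \ \int_X f \, d\mu \ > \ \mu(U) - \varepsilon .
\]
Letting $\varepsilon \to 0$ yields the first inequality.

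For the closed set $V$, the cleanest route is to pass to the complement. Since $X \setminus V$ is open, the first inequality applied to it gives $\liminf_n \frac1n \sum_{k=1}^n \chi_{X \setminus V}(T^k x) \ge \mu(X \setminus V)$. Using $\chi_V = 1 - \chi_{X \setminus V}$ and the elementary identity $\limsup(1 - a_n) = 1 - \liminf a_n$, we obtain
\[
\limsup_{n \to +\infty} \frac1n \sum_{k=1}^n \chi_V(T^k x) \ = \ 1 - \liminf_{n \to +\infty} \frac1n \sum_{k=1}^n \chi_{X \setminus V}(T^k x) \ \le \ 1 - \mu(X \setminus V) \ = \ \mu(V) .
\]
Alternatively one could approximate $\chi_V$ from above by continuous $g \ge \chi_V$ with $\int_X g \, d\mu < \mu(V) + \varepsilon$, which works symmetrically; the complement argument is shorter.

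I do not expect a genuine obstacle here, as this is the classical portmanteau correspondence specialized to Birkhoff averages; the only point requiring care is the direction of the approximation (squeezing $\chi_U$ from below and $\chi_V$ from above) so that the pointwise inequalities survive the passage to $\liminf$ and $\limsup$ respectively. The one input that must be invoked rather than proved is the regularity of Borel probability measures on compact metric spaces, which guarantees the existence of the approximating continuous functions via Urysohn's lemma.
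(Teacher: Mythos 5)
Your proof is correct. The paper does not actually prove this lemma at all --- it states it as well known and cites Walters \cite{PW} --- and your argument is exactly the standard portmanteau argument such a reference would give: weak$^*$ convergence of the empirical measures $\frac1n\sum_{k=1}^n \delta_{T^k x}$ to $\mu$ (which is precisely what genericity of $x$ plus $M_x=\{\mu\}$ means), approximation of $\chi_U$ from below by continuous functions (where, if you want to avoid invoking inner regularity and Urysohn, you can simply take $f_m(z)=\min\{1,\, m\, d(z, X\setminus U)\}$ and use monotone convergence), and the complement trick to convert the open-set inequality into the closed-set one.
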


   Given $\mu \in M(X)$. Since $X$ is compact, there are finite mutually disjoint subsets of $X$ such that the diameter of each subset is small enough and the sum of their measures are closed enough to one. Hence we have the following result.
   \begin{lemma}\label{measure lemma}
   	Let $\mu \in M(X)$. Then for any $\varepsilon >0$, there are finite mutually disjoint closed sets $\{\Lambda_k\}_{k=1}^{k_0}$ such that 
   	\begin{equation*}
   	\mu(\bigcup_{k=1}^{k_0} \Lambda_k) \geq 1- \varepsilon \ \ \text{and} \ \ diam(\Lambda_k) \leq \varepsilon, {\forall}k=1,2,\cdots,k_0.
   	\end{equation*}
   	Similarly, there are finite mutually disjoint open sets $\{V_s\}_{s=1}^{s_0}$ such that 
   	\begin{equation*}
   	\mu(\bigcup_{s=1}^{s_0} V_s) \geq 1- \varepsilon \ \ \text{and} \ \ diam(V_s) \leq \varepsilon, {\forall}s=1,2,\cdots,s_0.
   	\end{equation*}
   \end{lemma}
	
	\section{Some properties of $\overline{F}(x,y)$ and $N(F)$}\label{3}
	In this section, we will show some properties of $\overline{F}(x,y)$ and $N(F)$, which play important roles in the next sections.
	\begin{prop}\label{P3.1}
		Let $(X,T)$ be a t.d.s. Then
		    \begin{itemize}
		    \item [(\emph{1})] 
			For any sequences $\{x_k \}_{k=1}^n$ and $\{y_k \}_{k=1}^n$ of $X$, we have
			\begin{equation*}
			\inf\limits_{\sigma \in S_n} \sum\limits_{k=1}^n d(x_k, y_{\sigma(k)}) = \inf\limits_{\sigma \in S_n} \sum\limits_{k=1}^n d(y_k, x_{\sigma(k)}) .
			\end{equation*}
			In particular, for any $x,y \in X$, we have
			\begin{equation*}
			\inf_{\sigma \in S_n} \frac 1n \sum\limits_{k=1}^n d(T^k x, T^{\sigma(k)} y) = \inf_{\sigma \in S_n} \frac 1n \sum\limits_{k=1}^n d(T^k y, T^{\sigma(k)} x).
			\end{equation*}
			\item [(\emph{2})]
			For any sequences $\{x_k \}_{k=1}^n$, $\{y_k \}_{k=1}^n$ and $\{z_k \}_{k=1}^n$ of $X$,
			we have
			\begin{equation*}
			\inf\limits_{\sigma \in S_n} \sum\limits_{k=1}^n d(x_k, z_{\sigma(k)}) \le \inf\limits_{\sigma \in S_n} \sum\limits_{k=1}^n d(x_k, y_{\sigma(k)}) + \inf\limits_{\sigma \in S_n} \sum\limits_{k=1}^n d(y_k, z_{\sigma(k)}). 
			\end{equation*}
			In particular, for any $x,y,z \in X$, we have
			\begin{equation*}
			\inf_{\sigma \in S_n} \frac 1n \sum\limits_{k=1}^n d(T^k x, T^{\sigma(k)} z) \le \inf_{\sigma \in S_n} \frac 1n \sum\limits_{k=1}^n d(T^k x, T^{\sigma(k)} y)  + \inf_{\sigma \in S_n} \frac 1n \sum\limits_{k=1}^n d(T^k y, T^{\sigma(k)} z).
			\end{equation*}
			\item [(\emph{3})]
			For any $x,y \in X$, we have
			\begin{equation*}
			\overline{F}(x,y) = \overline{F}(y,x)
			\end{equation*}
			and
			\begin{equation*}
			\underline{F}(x,y) = \underline{F}(y,x).
			\end{equation*}
			
			\item [(\emph{4})]
			For any $x,y,z \in X$, we have
			\begin{equation*}
			\overline{F}(x,z) \le \overline{F}(x,y) + \overline{F}(y,z)
			\end{equation*}
			and
			\begin{equation*}
			\underline{F}(x,z) \le \underline{F}(x,y) + \overline{F}(y,z).
			\end{equation*}
		\end{itemize}
	\end{prop}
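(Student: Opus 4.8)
The plan is to prove the two finite combinatorial statements (1) and (2) first, treating $\inf_{\sigma\in S_n}\sum_{k=1}^n d(x_k,y_{\sigma(k)})$ as an optimal matching cost between two finite families of points, and then to read off the asymptotic statements (3) and (4) by specializing $x_k=T^kx$, $y_k=T^ky$, $z_k=T^kz$ and passing to $\limsup$ and $\liminf$. Since $S_n$ is finite, every infimum over $S_n$ is attained, which lets me work with genuine minimizers rather than near-minimizers.

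For part~(1) I would use only the symmetry of the metric together with the fact that $\sigma\mapsto\sigma^{-1}$ is a bijection of $S_n$. For a fixed $\sigma$, writing $d(x_k,y_{\sigma(k)})=d(y_{\sigma(k)},x_k)$ and reindexing the sum by $j=\sigma(k)$ turns $\sum_k d(y_{\sigma(k)},x_k)$ into $\sum_j d(y_j,x_{\sigma^{-1}(j)})$; as $\sigma$ ranges over $S_n$ so does $\sigma^{-1}$, so the two infima agree. The ``in particular'' clause is just the case $x_k=T^kx$, $y_k=T^ky$.

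The substantive step is part~(2), and this is where essentially all the bookkeeping lies. Let $\sigma_1$ minimize $\sum_k d(x_k,y_{\sigma(k)})$ and $\sigma_2$ minimize $\sum_k d(y_k,z_{\sigma(k)})$. Applying the triangle inequality for $d$ termwise to the matching $x_k\mapsto z_{\sigma_2(\sigma_1(k))}$ gives $d(x_k,z_{\sigma_2(\sigma_1(k))})\le d(x_k,y_{\sigma_1(k)})+d(y_{\sigma_1(k)},z_{\sigma_2(\sigma_1(k))})$ for each $k$. Summing over $k$ and reindexing the last sum by $j=\sigma_1(k)$ collapses it to $\sum_j d(y_j,z_{\sigma_2(j)})$, so the right-hand side becomes exactly the sum of the two minimal matching costs. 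Since $\sigma_2\circ\sigma_1\in S_n$, the left-hand side is bounded below by $\inf_{\sigma\in S_n}\sum_k d(x_k,z_{\sigma(k)})$, and the inequality follows; the metric version is again the specialization to orbit segments.

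Parts (3) and (4) are then formal consequences. Part~(3) follows by applying the metric form of~(1) for each $n$ and taking $\limsup$ (resp.\ $\liminf$) in $n$. For part~(4) I would set $a_n$, $b_n$, $c_n$ equal to the three normalized matching costs appearing in the metric form of~(2), so that $a_n\le b_n+c_n$ for all $n$. Taking $\limsup$ and using $\limsup(b_n+c_n)\le\limsup b_n+\limsup c_n$ gives the $\overline{F}$ triangle inequality; the mixed inequality for $\underline{F}$ comes from the elementary fact $\liminf(b_n+c_n)\le\liminf b_n+\limsup c_n$, which yields $\underline{F}(x,z)\le\underline{F}(x,y)+\overline{F}(y,z)$. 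I expect no genuine obstacle beyond keeping the reindexing in~(2) straight and correctly tracking which limit operation ($\limsup$ versus $\liminf$) each summand receives in~(4).
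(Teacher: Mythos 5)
Your proposal is correct and follows essentially the same route as the paper: reindexing by the inverse permutation for (1), composing the two optimal permutations and applying the triangle inequality termwise for (2), and passing to $\limsup$/$\liminf$ for (3) and (4). The only difference is cosmetic — you phrase (1) as a bijection $\sigma\mapsto\sigma^{-1}$ of $S_n$ rather than arguing via the minimizer and symmetry, and you spell out the subadditivity facts for $\limsup$ and $\liminf$ that the paper leaves implicit.
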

	\begin{proof}
		(1) There exists a $\sigma_1 \in S_n$ such that 
		\begin{equation*}
		\sum\limits_{k=1}^n d(x_k, y_{\sigma_1 (k)}) = \inf\limits_{\sigma \in S_n} \sum\limits_{k=1}^n d(x_k, y_{\sigma(k)}).
		\end{equation*}
		Let $\sigma_2 \in S_n$ such that $\sigma_1 \sigma_2 = \sigma_2 \sigma_1$ be the indentity element of $S_n$. Then we have
		\begin{equation*}
		\sum\limits_{k=1}^n d(x_k, y_{\sigma_1 (k)}) = \sum\limits_{k=1}^n d(x_{\sigma_2 \sigma_1 (k)}, y_{\sigma_1 (k)}) = \sum\limits_{k=1}^n d(x_{\sigma_2 (k)}, y_k) \ge \inf\limits_{\sigma \in S_n} \sum\limits_{k=1}^n d(y_k, x_{\sigma(k)}). 
		\end{equation*}
		Thus, 
		\begin{equation*}
		\inf\limits_{\sigma \in S_n} \sum\limits_{k=1}^n d(x_k, y_{\sigma(k)}) \ge \inf\limits_{\sigma \in S_n} \sum\limits_{k=1}^n d(y_k, x_{\sigma(k)}).
		\end{equation*}
		With the same reason, we can get
		\begin{equation*}
		\inf\limits_{\sigma \in S_n} \sum\limits_{k=1}^n d(y_k, x_{\sigma(k)}) \ge \inf\limits_{\sigma \in S_n} \sum\limits_{k=1}^n d(x_k, y_{\sigma(k)}).
		\end{equation*}
		Hence,
		\begin{equation*}
		\inf\limits_{\sigma \in S_n} \sum\limits_{k=1}^n d(x_k, y_{\sigma(k)}) = \inf\limits_{\sigma \in S_n} \sum\limits_{k=1}^n d(y_k, x_{\sigma(k)}).
		\end{equation*}
		
		(2) There are $\sigma_1, \sigma_2 \in S_n$ such that
		\begin{equation*}
		\sum\limits_{k=1}^n d(x_k, y_{\sigma_1 (k)}) = \inf_{\sigma \in S_n} \sum\limits_{k=1}^n d(x_k, y_{\sigma (k)})
		\end{equation*}
		and
		\begin{equation*}
		\sum\limits_{k=1}^n d(y_k, z_{\sigma_2 (k)}) = \inf_{\sigma \in S_n} \sum\limits_{k=1}^n d(y_k, z_{\sigma (k)}).
		\end{equation*}
		Let $\sigma_3 = \sigma_2 \sigma_1$, then we have
		\begin{equation*}
		\begin{split}
		\sum\limits_{k=1}^n d(x_k, z_{\sigma_3 (k)})
		& \le \sum\limits_{k=1}^n d(x_k, y_{\sigma_1 (k)}) + \sum\limits_{k=1}^n d(y_{\sigma_1 (k)}, z_{\sigma_3 (k)}) \\
		& = \sum\limits_{k=1}^n d(x_k, y_{\sigma_1 (k)}) + \sum\limits_{k=1}^n d(y_k, z_{\sigma_2 (k)}) \\
		&= \inf_{\sigma \in S_n} \sum\limits_{k=1}^n d(x_k, y_{\sigma (k)}) + \inf_{\sigma \in S_n} \sum\limits_{k=1}^n d(y_k, z_{\sigma (k)}).
		\end{split}
		\end{equation*}
		Thus,
		\begin{equation*}
		\inf_{\sigma \in S_n} \sum\limits_{k=1}^n d(x_k, z_{\sigma (k)}) \le \inf_{\sigma \in S_n} \sum\limits_{k=1}^n d(x_k, y_{\sigma (k)}) + \inf_{\sigma \in S_n} \sum\limits_{k=1}^n d(y_k, z_{\sigma (k)}).
		\end{equation*}
		
		By (1) and (2), we can easily deduce (3) and (4).		
	\end{proof}
	
    $\overline{F}(x,y)$ and $\underline{F}(x,y)$ are functions which can measure the difference between distributions of $\text{Orb}(x)$ and $\text{Orb}(y)$. When $x$ and $y$ are in the same orbit, the distributions of $\text{Orb}(x)$ and $\text{Orb}(y)$ are same. Thus, $F(x,y)=0$. Next, we shall show that $\overline{F}|_{\text{Orb}(x) \times \text{Orb}(y)}$ and $\underline{F}|_{\text{Orb}(x) \times \text{Orb}(y)}$ are constants for any $x,y \in X$.
    \begin{prop}\label{P3.2}
    	Let $(X,T)$ be a t.d.s. For any $x,y \in X$ and $r, s \in \mathbb{N}$, we have
    	\[
    	\overline{F}(T^r x,T^s y) =\overline{F}(x,y)
    	\]
    	and
    	\[
    	\underline{F}(T^r x,T^s y) =\underline{F}(x,y).
    	\]
    	If $F(x,y)$ exists, we also have
    	\[
    	F(T^r x,T^s y) =F(x,y).
    	\]
    \end{prop}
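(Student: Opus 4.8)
The plan is to show that shifting $x$ by $T^r$ changes the matching cost by a controllable amount, and then conclude the same for shifting $y$ by symmetry (Proposition~\ref{P3.1}(1)).

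First I would fix $x,y\in X$ and $r\in\mathbb{N}$, and compare, for each $n$, the quantity $\inf_{\sigma\in S_n}\frac1n\sum_{k=1}^n d(T^k x,T^{\sigma(k)}y)$ against $\inf_{\sigma\in S_{n+r}}\frac1{n+r}\sum_{k=1}^{n+r} d(T^k(T^r x),T^{\sigma(k)}y)$, i.e. against the $(n+r)$-term expression built from $T^r x$. The orbit $\{T^k(T^r x):1\le k\le n+r\}=\{T^{r+1}x,\dots,T^{n+2r}x\}$ differs from $\{T^{k}x:1\le k\le n\}=\{T x,\dots,T^n x\}$ only in finitely many ($\le 2r$) terms. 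The key estimate is that removing or adding a bounded number of points from a matching changes the averaged cost by at most $O(\mathrm{diam}(X)\cdot 2r/n)$, which tends to $0$ as $n\to\infty$. Concretely, given a near-optimal matching for the length-$n$ problem based at $Tx$, I would extend it to a matching for the length-$(n+r)$ problem based at $T^r x$ by pairing the $\le 2r$ extra orbit points of $T^r x$ arbitrarily with the $\le 2r$ leftover orbit points of $y$; each such pair contributes at most $\mathrm{diam}(X)$, so the total extra cost is $\le 2r\,\mathrm{diam}(X)$, and after dividing by $n+r$ this is negligible in the limit. Running the same argument in reverse gives the matching inequality in the other direction.

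Because $X$ is compact, $\mathrm{diam}(X)<\infty$, so the error term $\frac{2r\,\mathrm{diam}(X)}{n}\to 0$. Taking $\limsup_{n\to\infty}$ of both estimates yields $\overline{F}(T^r x,y)=\overline{F}(x,y)$, and taking $\liminf$ yields $\underline{F}(T^r x,y)=\underline{F}(x,y)$; if $F(x,y)$ exists the two coincide and so $F(T^r x,y)=F(x,y)$. Applying Proposition~\ref{P3.1}(1), which gives $\inf_{\sigma}\sum_k d(T^k x,T^{\sigma(k)}y)=\inf_{\sigma}\sum_k d(T^k y,T^{\sigma(k)}x)$, together with Proposition~\ref{P3.1}(3) (symmetry of $\overline F$ and $\underline F$), I can interchange the roles of $x$ and $y$ to also absorb the shift $T^s$ on $y$: namely $\overline{F}(T^r x,T^s y)=\overline{F}(T^s y,T^r x)=\overline{F}(y,T^r x)=\overline{F}(T^r x,y)=\overline{F}(x,y)$, and likewise for $\underline F$ and $F$.

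The main obstacle I anticipate is bookkeeping the index mismatch cleanly: the length-$n$ matching and the length-$(n+r)$ matching range over permutation groups of different sizes, so I must be careful to produce a genuine element of the larger (or smaller) symmetric group when extending or restricting a matching, and to account precisely for the at most $2r$ unmatched indices on each side. Once the combinatorial extension of a permutation is set up correctly, the analytic part is just the elementary bound by $\mathrm{diam}(X)$ and the observation that a bounded additive perturbation vanishes after averaging.
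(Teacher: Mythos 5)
Your argument is correct, but it is organized differently from the paper's. The paper disposes of the first-coordinate shift in two lines: it first observes (in the remark preceding the proposition) that $\overline{F}(x,T^r x)=0$ because $x$ and $T^r x$ lie on the same orbit, and then applies the asymmetric triangle inequality of Proposition~\ref{P3.1}(4) twice, $\underline{F}(T^r x,y)\le \underline{F}(x,y)+\overline{F}(T^r x,x)$ and $\underline{F}(x,y)\le \underline{F}(T^r x,y)+\overline{F}(x,T^r x)$, to get equality; the second coordinate is handled the same way. You instead perform the matching surgery directly against the orbit of $y$: you take a near-optimal permutation for the length-$n$ problem at $(x,y)$, keep it on the $O(r)$-fewer common indices, and pair the $\le 2r$ leftover points on each side arbitrarily at cost $\le 2r\,\mathrm{diam}(X)$, which vanishes after averaging. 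The two proofs rest on the same elementary fact --- a bounded number of mismatched orbit points contributes $O(r\,\mathrm{diam}(X)/n)$ to the averaged cost --- but the paper packages that fact once as $\overline{F}(x,T^rx)=0$ and then lets the already-proved pseudometric inequalities do the bookkeeping, whereas you redo the combinatorics by hand against $y$. Your route is self-contained and makes explicit the orbit-comparison step the paper leaves as a remark; the paper's route is shorter and avoids the index-mismatch bookkeeping you flag as the main hazard (note that comparing the length-$n$ problems for $(x,y)$ and $(T^rx,y)$ at the \emph{same} $n$, rather than $n$ versus $n+r$, would keep the $y$-side fixed and simplify your accounting further). Your use of Proposition~\ref{P3.1}(1) and (3) to transfer the result to the second coordinate is exactly parallel to the paper's "similarly" step and is fine.
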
 
    \begin{proof}
    	By Proposition~\ref{P3.1}, we have
    	\[
    	\underline{F}(T^r x,y) \le \underline{F}(x,y) + \overline{F}(T^r x,x)= \underline{F}(x,y) .
    	\]
    	On the other hand, we have
    	\[
    	\underline{F}(x,y) \le \underline{F}(T^r x,y) + \overline{F}(x,T^r x) = \underline{F}(T^r x,y).
    	\]
    	Thus,
    	\[
    	\underline{F}(T^r x,y) = \underline{F}(x,y).
    	\]
    	Similarly, we can deduce that
    	\[
    	\underline{F}(T^r x,T^s y) = \underline{F}(T^r x,y).
    	\]
    	Hence, we have
    	\[
        \underline{F}(T^r x,T^s y)=\underline{F}(x,y).
    	\]
    	With the same reason, we can deduce the last two equalities.
    \end{proof}

    By Proposition~\ref{P3.2}, we can deduce $N(F)$ and $N(\underline{F})$ are both invariant sets with respect to $T^r \times T^s$ for any $r, s \in \mathbb{N}$. Given $x \in X$, let
    \[
    N(F,x)=\{y\in X | F(x,y)=0\}
    \]
    and
   \[
   N(\underline{F},x)=\{y\in X | \underline{F}(x,y)=0\}.
   \]
   Then we have the following proposition.
   \begin{prop}\label{P3.3}
   	Let $(X,T)$ be a t.d.s. Then for any $x\in X$, $N(F,x)$ and $N(\underline{F},x)$ are both Borel invariant sets. 
   \end{prop}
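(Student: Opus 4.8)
The plan is to prove two assertions for each of $N(F,x)$ and $N(\underline{F},x)$: that they are invariant under $T$, and that they are Borel. The invariance should follow almost immediately from Proposition~\ref{P3.2}. Indeed, if $y \in N(F,x)$, then $F(x,y)=0$, and by Proposition~\ref{P3.2} we have $F(x,Ty)=F(x,y)=0$ (taking $r=0$, $s=1$), so $Ty \in N(F,x)$. This shows $T\big(N(F,x)\big) \subset N(F,x)$, which is the required invariance; the identical argument works for $N(\underline{F},x)$.

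The substantive part will be showing these sets are Borel. First I would fix $x$ and regard $y \mapsto \inf_{\sigma \in S_n} \frac1n \sum_{k=1}^n d(T^k x, T^{\sigma(k)} y)$ as a function $g_n(y)$ of $y$. The key observation is that each $g_n$ is continuous in $y$: for fixed $n$, it is a finite infimum (over the finite group $S_n$) of finite sums of the maps $y \mapsto d(T^k x, T^{\sigma(k)} y)$, and each such map is continuous since $T$ and $d$ are continuous and $T^{\sigma(k)}$ is a composition of continuous maps. A finite minimum of continuous functions is continuous, so $g_n \in C(X)$ for every $n$. Consequently $\overline{F}(x,\cdot) = \limsup_n g_n$ and $\underline{F}(x,\cdot) = \liminf_n g_n$ are Borel measurable functions of $y$, being pointwise limsup/liminf of continuous functions. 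The set $N(\underline{F},x) = \{y : \underline{F}(x,y)=0\} = (\underline{F}(x,\cdot))^{-1}(\{0\})$ is then the preimage of a closed set under a Borel function, hence Borel.

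For $N(F,x)$ a little more care is needed, because membership requires both that $F(x,y)$ exists and that it equals $0$. I would write
\[
N(F,x) = \{y : \overline{F}(x,y)=0\} \cap \{y : \overline{F}(x,y)=\underline{F}(x,y)\}.
\]
Both $\overline{F}(x,\cdot)$ and $\underline{F}(x,\cdot)$ are Borel by the argument above, so $\{y:\overline{F}(x,y)=0\}$ is Borel and $\{y:\overline{F}(x,y)=\underline{F}(x,y)\}$ is Borel (the difference of two Borel functions is Borel, and its zero set is the preimage of a closed set). In fact, since $\underline{F}(x,y)\le \overline{F}(x,y)$ always, the condition $\overline{F}(x,y)=0$ already forces $\underline{F}(x,y)=0$, so $F(x,y)$ exists and equals $0$; thus $N(F,x) = \{y:\overline{F}(x,y)=0\}$ coincides with $N(\overline{F},x)$, matching the earlier observation that $N(\overline{F})=N(F)$. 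Either way the set is Borel.

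The main obstacle, such as it is, lies in establishing the continuity of the finite-stage functions $g_n$ cleanly; once that is in hand, the limsup/liminf pass to Borel measurability routinely and the conclusion is immediate. I expect no genuine difficulty, only the bookkeeping of verifying that the infimum over the finite permutation group preserves continuity and that the zero sets are correctly identified as preimages of closed sets.
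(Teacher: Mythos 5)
Your proposal is correct and follows essentially the same route as the paper: the paper's explicit formula $N(F,x)=\bigcap_{m}\bigcup_{r}\bigcap_{n\ge r}\bigcup_{\sigma\in S_n}\{y: F_{n,\sigma}(x,y)<\frac1m\}$ is just the set-theoretic form of your observation that $y\mapsto\inf_{\sigma\in S_n}\frac1n\sum_{k}d(T^kx,T^{\sigma(k)}y)$ is continuous and that $\overline{F}(x,\cdot)$, $\underline{F}(x,\cdot)$ are therefore Borel, with invariance obtained in both cases from Proposition~\ref{P3.2}. Your remark that $\overline{F}(x,y)=0$ already forces existence of $F(x,y)$, so that $N(F,x)=\{y:\overline{F}(x,y)=0\}$, correctly handles the only delicate point.
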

   \begin{proof}
   	For any $m, n \in \mathbb{N^+}$ and any $\sigma \in S_n$, let
   	\[
   	O (x, F_{n,\sigma}, \frac 1m)= \{y \in X| F_{n,\sigma}(x,y) < \frac 1m \},
   	\]
   	where 
   	\[
   	F_{n,\sigma}(x,y) = \frac 1n \sum_{k=1}^n d(T^k x, T^{\sigma(k)} y).
   	\]
   	Then $O (x, F_{n,\sigma}, \frac 1m)$ is an open set. 
   	Since
   	\[
   	N(F,x)=\bigcap_{m=1}^{\infty} \bigcup_{r=1}^{\infty} \bigcap_{n=r}^{\infty} \bigcup_{\sigma \in S_n} O (x, F_{n,\sigma}, \frac 1m)
   	\]
   	and
   	\[
   	N(\underline{F},x)=\bigcap_{m=1}^{\infty} \bigcap_{r=1}^{\infty} \bigcup_{n=r}^{\infty} \bigcup_{\sigma \in S_n} O (x, F_{n,\sigma}, \frac 1m),
   	\]
   we derive that $N(F,x)$ and $N(\underline{F},x)$ are both Borel sets.
   	 
   	On the other hand, by Proposition~\ref{P3.2} we derive that $N(F,x)$ and $N(\underline{F},x)$ are both invariant sets.
   \end{proof}
 
   The following proposition provides a way to estimate the upper bound of $\overline{F}(x,y)$. 
   
   \begin{prop}\label{P3.4}
   	Let $(X,T)$ be a t.d.s. and $\{U_s \}_{s=1}^{s_0}$ be mutually disjoint subsets of $X$. Given $x,y \in X$. If the following two conditions hold:
   	\begin{itemize}
   		\item [(\emph{1})]
   		There is $\varepsilon > 0$ such that $diam(U_s) \le \varepsilon$ holds for any $s \in \{1, 2, \cdots, s_0 \}$;
   		\item [(\emph{2})]
   		For any $s \in \{1, 2, \cdots, s_0 \}$, there is $a_s \ge 0$ such that
   		\begin{equation*}
   		\liminf_{n \to +\infty} \frac 1n \sum\limits_{k=1}^n \chi_{U_{s}} (T^k x) \ge a_s
   		\end{equation*}
   		and 
   		\begin{equation*}
   		\liminf_{n \to +\infty} \frac 1n \sum\limits_{k=1}^n \chi_{U_{s}} (T^k y) \ge a_s.
   		\end{equation*}
   	\end{itemize}
   	Then we have 
   	\begin{equation*}
   	\overline{F}(x,y) \le \varepsilon \sum\limits_{s=1}^{s_0} a_s + M(1- \sum\limits_{s=1}^{s_0} a_s),
   	\end{equation*}
   	where $M = diam(X)$.
   \end{prop}

   \begin{proof}
   	Given $\delta > 0$, there is an $n_1 > 0$ such that for any $n> n_1$ and any $s \in \{1, 2, \cdots, s_0 \}$, we have
   	\begin{equation}\label{eq3.1}
   	\frac1{n} \sum_{k=1}^n \chi_{U_s} (T^k x)  \geq \liminf_{n \to +\infty} \frac 1n \sum\limits_{k=1}^n \chi_{U_{s}} (T^k x) - \delta \ge a_s - \delta.
   	\end{equation}
   	Similarly, there is an $n_2 > 0$ such that for any $n> n_2$ and any $s \in \{1, 2, \cdots, s_0 \}$, we have
    \begin{equation}\label{eq3.2}
    	\frac1{n} \sum_{k=1}^n \chi_{U_s} (T^k y) \ge a_s - \delta.
    \end{equation}
   
   	Given $n > \max \{n_1, n_2\}$. For any $s \in \{1, 2, \cdots, s_0 \}$, let
   	\[
   	N_n(U_s,x)=\{k\in \mathbb{N^{+}}| T^k x \in U_s, k \le n \} \ \ \text{and} \ \ N_n(U_s,y)=\{k\in \mathbb{N^{+}}| T^k y \in U_s, k \le n \}.
   	\]
   	By \eqref{eq3.1} and \eqref{eq3.2}, there exists $r_{s,n} \in \mathbb{N}$ with 
   	\begin{equation}\label{eq3.3}
   	a_s n +1 \ge r_{s,n} \ge (a_s - \delta)n
   	\end{equation}
   	such that 
   	\begin{equation*}
   	\# \big(N_n(U_s,x) \big) \ge r_{s,n} \ , \  \# \big(N_n(U_s,y) \big) \ge r_{s,n} \ .
   	\end{equation*}
   	Thus there are subsets $\{k_{x,s,r}\}_{r=1}^{r_{s,n}}$ and $\{k_{y,s,r}\}_{r=1}^{r_{s,n}}$ of $\mathbb{N}^+$ such that
   	\begin{equation*}
   	\{k_{x,s,1}, k_{x,s,2}, \cdots, k_{x,s,r_{s,n}}\} \subset N_n(U_s,x)
   	\end{equation*}
   	and
   	\begin{equation*}
   	\{k_{y,s,1}, k_{y,s,2}, \cdots, k_{y,s,r_{s,n}}\} \subset N_n(U_s,y).
   	\end{equation*} 
   	
   	Since $N_n(U_{s_1},x) \bigcap N_n(U_{s_2},x) = \O$ and $N_n(U_{s_1},y) \bigcap N_n(U_{s_2},y) = \O$ hold for any \\
   	$s_1, s_2 \in\{1,2, \cdots, s_0\}$ with $s_1 \not= s_2$, there is 
   	$\sigma_n \in S_n$ such that 
   	\begin{equation}\label{eq3.4}
   	\sigma_n (k_{x,s,r}) = k_{y,s,r}
   	\end{equation}
   	holds for any $s\in \{1,2, \cdots, s_0\}$ and any $r \in \{1, 2, \cdots, r_{s,n}\}$.
   	
   	Let
   	\begin{equation*}
   	A= \bigcup_{s=1}^{s_0} \{k_{x,s,r}\}_{r=1}^{r_{s,n}} \ \ \text{and} \ \ 	B= \{1, 2, \cdots, n\} \backslash A.
   	\end{equation*}
   	Then by \eqref{eq3.3}, we have
   	\begin{equation}\label{eq3.5}
   	\#(A) = \sum_{s=1}^{s_0} r_{s,n} \le \sum_{s=1}^{s_0} (a_s n + 1)
   	\end{equation}
   	and
   	\begin{equation}\label{eq3.6}
   	\#(B) = n - \sum\limits_{s=1}^{s_0} r_{s,n} \le n - \sum_{s=1}^{s_0} (a_s - \delta)n.
   	\end{equation}
   	
   	By \eqref{eq3.4}, we deduce that 
   	\begin{equation}\label{eq3.7}
   	d(T^k x, T^{\sigma_n(k)} y) \le \varepsilon
   	\end{equation}
   	holds for any $k \in A$. On the other hand, for any $k \in B$ we have
   	\begin{equation}\label{eq3.8}
   	d(T^k x, T^{\sigma_n(k)} y) \le diam(X) = M.
   	\end{equation}
   	Hence, we derive that
   	\begin{equation*}
   	\begin{split}
   	\inf_{\sigma \in S_n} \frac 1{n} \sum_{k=1}^{n} d(T^k x,T^{\sigma(k)} y) 
   	& \le \frac 1{n} \sum_{k=1}^{n} d(T^k x,T^{\sigma_n(k)} y) \\
   	& = \frac 1n \sum_{k \in A} d(T^k x,T^{\sigma_n(k)} y) + \frac 1n \sum_{k \in B} d(T^k x,T^{\sigma_n(k)} y) \\
   	& \le \frac {\varepsilon}n \#(A) + \frac Mn \#(B) \ \ \big({\text by \ \eqref{eq3.7} , \eqref{eq3.8}  }\big) \\
   	& \le \frac {\varepsilon}n \sum_{s=1}^{s_0} (a_s n + 1) + \frac Mn \big(n -\sum_{s=1}^{s_0} (a_s - \delta)n \big),
   	\end{split}
   	\end{equation*}
   	where the last inequality comes from \eqref{eq3.5} and \eqref{eq3.6}. Let $n \to +\infty$, then we have
   	\begin{equation*}
   	\overline{F}(x,y) = \limsup_{n \to +\infty} \inf_{\sigma \in S_n} \frac 1{n} \sum_{k=1}^{n} d(T^k x,T^{\sigma(k)} y) \le \varepsilon \sum_{s=1}^{s_0} a_s + M \big(1- \sum_{s=1}^{s_0} (a_s - \delta) \big).
   	\end{equation*}
   	Let $\delta \to 0$, and then we deduce that
   	\begin{equation*}
   	\overline{F}(x,y) \le \varepsilon \sum_{s=1}^{s_0} a_s + M (1- \sum_{s=1}^{s_0} a_s ).
   	\end{equation*}
   	This finishes the proof of Proposition \ref{P3.4}.
   \end{proof}
  
  With respect to $\underline{F}(x,y)$, we have the similar proposition.
  
   \begin{prop}
  	Let $(X,T)$ be a t.d.s. and $\{U_s \}_{s=1}^{s_0}$ be mutually disjoint subsets of $X$. Given $x,y \in X$. If the following two conditions hold:
  	\begin{itemize}
  		\item [(\emph{1})]
  		There is $\varepsilon > 0$ such that $diam(U_s) \le \varepsilon$ holds for any $s \in \{1, 2, \cdots, s_0 \}$;
  		\item [(\emph{2})]
  		There is a subsequence $\{n_r\}_{r=1}^{\infty}$ of $\mathbb{N}^+$ such that for any $s \in \{1, 2, \cdots, s_0 \}$, the following inequalities 
  		\begin{equation*}
  		\lim_{r \to +\infty} \frac 1{n_r} \sum\limits_{k=1}^{n_r} \chi_{U_{s}} (T^k x) \ge a_s
  		\end{equation*}
  		and 
  		\begin{equation*}
  		\lim_{r \to +\infty} \frac 1{n_r} \sum\limits_{k=1}^{n_r} \chi_{U_{s}} (T^k y) \ge a_s.
  		\end{equation*}
  		hold for some $a_s \ge 0$.
  	\end{itemize}
  	Then we have 
  	\begin{equation*}
  	\underline{F}(x,y) \le \varepsilon \sum\limits_{s=1}^{s_0} a_s + M(1- \sum\limits_{s=1}^{s_0} a_s),
  	\end{equation*}
  	where $M = diam(X)$.
  \end{prop}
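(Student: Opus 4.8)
The plan is to follow the proof of Proposition~\ref{P3.4} almost verbatim, exploiting that $\underline{F}(x,y)$ is a \emph{liminf} over all $n$ and is therefore bounded above by the limit of $\inf_{\sigma\in S_n}\frac1n\sum_{k=1}^n d(T^kx,T^{\sigma(k)}y)$ along \emph{any} single subsequence. This is exactly what allows the weaker, merely subsequential, hypotheses here.

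First I would fix $\delta>0$ and use the hypothesis to find an $r_0$ so that for every $r>r_0$ and every $s=1,\dots,s_0$ both $\frac1{n_r}\sum_{k=1}^{n_r}\chi_{U_s}(T^kx)\ge a_s-\delta$ and $\frac1{n_r}\sum_{k=1}^{n_r}\chi_{U_s}(T^ky)\ge a_s-\delta$ hold. For each such $r$ I would then run the matching construction of Proposition~\ref{P3.4}: setting $N_{n_r}(U_s,x)=\{k\le n_r: T^kx\in U_s\}$ and $N_{n_r}(U_s,y)=\{k\le n_r: T^ky\in U_s\}$, disjointness of the $U_s$ makes these index sets pairwise disjoint across $s$ for each orbit, and their cardinalities are at least $n_r(a_s-\delta)$. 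One assembles a permutation $\sigma_{n_r}\in S_{n_r}$ that injectively matches at least $n_r(a_s-\delta)$ indices of $N_{n_r}(U_s,x)$ into $N_{n_r}(U_s,y)$ for each $s$, filling in the rest arbitrarily. Matched pairs then lie in a common $U_s$, contributing distance $\le\mathrm{diam}(U_s)\le\varepsilon$, while the remaining $\le n_r\big(1-\sum_s(a_s-\delta)\big)$ pairs contribute at most $M=\mathrm{diam}(X)$.

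This gives, for every $r>r_0$,
\[
\inf_{\sigma\in S_{n_r}}\frac1{n_r}\sum_{k=1}^{n_r}d(T^kx,T^{\sigma(k)}y)\le \varepsilon\sum_{s=1}^{s_0}(a_s-\delta)+M\Big(1-\sum_{s=1}^{s_0}(a_s-\delta)\Big)\le \varepsilon\sum_{s=1}^{s_0}a_s+M\Big(1-\sum_{s=1}^{s_0}a_s\Big)+Ms_0\delta .
\]
Since $\underline{F}(x,y)$ is the liminf over all $n$, it does not exceed the limit of the left-hand side along $\{n_r\}$; letting $r\to+\infty$ and then $\delta\to0$ yields the claimed bound.

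The only content beyond Proposition~\ref{P3.4} is this final passage from a subsequential estimate to a bound on the liminf, and it is precisely the step that fails for $\overline{F}$ (hence the full-sequence hypothesis was needed there). The technical heart remains the simultaneous matching, but since the source families $\{N_{n_r}(U_s,x)\}_s$ and the target families $\{N_{n_r}(U_s,y)\}_s$ are each pairwise disjoint, the partial injections can be combined into one well-defined partial permutation before extending to all of $S_{n_r}$; I expect no new obstacle there.
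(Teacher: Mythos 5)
Your proof is correct and matches the intended argument: the paper states this proposition without a separate proof, presenting it as the direct analogue of Proposition \ref{P3.4}, and your adaptation — running the same matching construction along the subsequence $\{n_r\}$ and then using that the liminf defining $\underline{F}(x,y)$ is dominated by the limit along any single subsequence — is exactly the intended modification. You also correctly identify why the merely subsequential hypothesis suffices here but not for $\overline{F}$.
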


   \section{Proof of Theorem \ref{1.2}}\label{4}
   In this section, we will prove Theorem \ref{1.2}. Assume the contrary that there are generic points $x,y \in X$ such that $F(x,y)$ does not exist, this is $\alpha = \overline{F}(x,y) -  \underline{F} (x,y) > 0$. Then we estimate the upper bound of $\overline{F}(x,y)$ and the lower bound of $\underline{F} (x,y)$, from which we deduce that $\overline{F}(x,y) -  \underline{F} (x,y) \le \frac {\alpha}2$. This contradicts with the assumption. So $F(x,y)$ exists when $x,y \in X$ are generic points. In the Proof of Theorem \ref{1.2}, we need the following lemma which is a direct corollary of Birkhoff-Von Neumann Theorem \cite{AB}.
   
     \begin{lemma}\label{BN_lemma}
     	Let $X$ be a metric space and $m, n \in \mathbb{N}^+$. If $\{x_i\}_{i=1}^n, \{y_i\}_{i=1}^n$, $\{\overline{x}_j \}_{j=1}^{mn}$ and $\{\overline{y}_j \}_{j=1}^{mn}$ are subsequences of $X$ and there is $\sigma_{mn} \in S_{mn}$ such that
     	\begin{equation*}
     	\overline{x}_{\sigma_{mn} (j)} = x_i, \ \forall j = m(i-1)+1, m(i-1)+2, \cdots, m(i-1) + m
     	\end{equation*}
     	and
     	\begin{equation*}
     	\overline{y}_{\sigma_{mn} (j)} = y_i, \ \forall j = m(i-1)+1, m(i-1)+2, \cdots, m(i-1) + m
     	\end{equation*}
     	hold for any $i \in \{ 1, 2, \cdots, n \}$, then we have
     \begin{equation*}
     \inf_{\sigma \in S_{mn} } \sum_{j=1}^{mn} d(\overline{x}_j, \overline{y}_{\sigma(j)}) = m \cdot \inf_{\sigma \in S_n} \sum_{i=1}^n d( x_i, y_{\sigma(i)}).
     \end{equation*}
     \end{lemma}
   
    Next, we provide the detailed proof of Theorem \ref{1.2}.
	\begin{proof}[\bf Proof of Theorem~\ref{1.2}]
		Let $x,y \in X$ be generic points of $(X,T)$. Put
		\[
		\alpha = \overline{F}(x,y)-\underline{F}(x,y).
		\]
		We assume that $F(x,y)$ does not exist, then $\alpha > 0$.
		
		Since $x,y$ are generic points, there are $\mu_x, \mu_y \in M(X,T)$ such that
		\begin{equation*}
		\mu_x = \lim_{n \to +\infty} \frac 1n \sum_{k=1}^n \delta_{T^k x} \ \ \text{and} \ \ \mu_y = \lim_{n \to +\infty} \frac 1n \sum_{k=1}^n \delta_{T^k y}.
		\end{equation*}
		Let $\varepsilon = \frac {\alpha}{8+16M}$, where $M=diam(X)$. By Lemma~\ref{measure lemma}, there exist finite mutually disjoint  open sets $\{\Lambda_r\}_{r=1}^{r_0}$ of $X$ such that 
		\begin{equation}\label{eq4.1}
		\sum_{r=1}^{r_0}\mu_x(\Lambda_r) \ge 1 - \varepsilon, \ \  diam(\Lambda_r) \le \varepsilon, {\forall}r=1,2,\cdots,r_0.
		\end{equation}
		Similarly, there exist finite mutually disjoint open sets $\{V_s\}_{s=1}^{s_0}$ of $X$ 
		such that
		\begin{equation}\label{eq4.2}
		\sum_{s=1}^{s_0}\mu_y(V_s) \ge 1 - \varepsilon, \ \ diam(V_s) \le \varepsilon, {\forall}s=1,2,\cdots,s_0.
		\end{equation}
		Without loss of generality, we can assume that $\mu_x(\Lambda_r)>0$ and $\mu_y(V_s)>0$ for any $r \in \{1,2,\cdots,r_0 \}$, $s \in \{1,2,\cdots,s_0\}$. Let $\Lambda_{r_0+1} = X \backslash \bigcup\limits_{r=1}^{r_0} \Lambda_r$ and $V_{s_0+1} = X \backslash \bigcup\limits_{s=1}^{s_0} V_s$.
		We select sequences $\{x_r\}_{r=1}^{r_0 +1}$ and $\{y_s\}_{s=1}^{s_0 +1}$ of $X$ such that $x_r \in \Lambda_r$ and $y_s \in V_s$ for any  $r \in \{1,2,\cdots,r_0 +1\}, s \in \{1,2,\cdots,s_0 +1\}$.
		
		Let $\{\hat{x}_k \}_{k=1}^n$ and $\{\hat{y}_k \}_{k=1}^n$ be sequences such that $\hat{x}_k = x_r$ if $T^k x \in \Lambda_r$ and $\hat{y}_k = y_s$ if $T^k y \in V_s$.
		By Proposition \ref{P3.1}, we have 
		\begin{equation}\label{eq4.3}
		\begin{split}
		&\hskip 0.5cm \inf_{\sigma \in S_n} \frac 1n \sum_{k=1}^n d(T^k x,T^{\sigma(k)} y)\\
		&\le \inf_{\sigma \in S_n} \frac 1n \sum_{k=1}^n d (T^{k} x,\hat{x}_{\sigma(k)}) + \inf_{\sigma \in S_n} \frac 1n \sum_{k=1}^n d(\hat{x}_{\sigma(k)}, \hat{y}_k)+ \inf_{\sigma \in S_n} \frac 1n \sum_{k=1}^n d (\hat{y}_k, T^{\sigma(k)} y)\\
		&= \inf_{\sigma \in S_n} \frac 1n \sum_{k=1}^n d (\hat{x}_k, T^{\sigma(k)} x) + \inf_{\sigma \in S_n} \frac 1n \sum_{k=1}^n d(\hat{x}_k, \hat{y}_{\sigma(k)} )+ \inf_{\sigma \in S_n} \frac 1n \sum_{k=1}^n d (\hat{y}_k, T^{\sigma(k)} y).
		\end{split}
		\end{equation}
		Similarly, we have
		\begin{equation*}
		\begin{split}
		&\hskip 0.5cm \inf_{\sigma \in S_n} \frac 1n \sum_{k=1}^n d(\hat{x}_k, \hat{y}_{\sigma(k)} )\\
		&\le \inf_{\sigma \in S_n} \frac 1n \sum_{k=1}^n d (\hat{x}_k, T^{\sigma(k)} x) + \inf_{\sigma \in S_n} \frac 1n \sum_{k=1}^n d(T^{\sigma(k)} x,T^k y) + \inf_{\sigma \in S_n} \frac 1n \sum_{k=1}^n d (T^k y, \hat{y}_{\sigma(k)} )\\
		&= \inf_{\sigma \in S_n} \frac 1n \sum_{k=1}^n d (\hat{x}_k, T^{\sigma(k)} x) + \inf_{\sigma \in S_n} \frac 1n \sum_{k=1}^n d(T^k x,T^{\sigma(k)} y) + \inf_{\sigma \in S_n} \frac 1n \sum_{k=1}^n d (\hat{y}_k, T^{\sigma(k)} y).
		\end{split}
		\end{equation*}
		Thus, we deduce that 
		\begin{equation}\label{eq4.4}
		\begin{split}
		&\hskip 0.5cm \inf_{\sigma \in S_n} \frac 1n \sum_{k=1}^n d(T^k x,T^{\sigma(k)} y)\\
		&\ge \inf_{\sigma \in S_n} \frac 1n \sum_{k=1}^n d(\hat{x}_k, \hat{y}_{\sigma(k)} ) - \inf_{\sigma \in S_n} \frac 1n \sum_{k=1}^n d (\hat{x}_k, T^{\sigma(k)} x) - \inf_{\sigma \in S_n} \frac 1n \sum_{k=1}^n d (\hat{y}_k, T^{\sigma(k)} y).
		\end{split}
		\end{equation}
		
		Our aim is to estimate the bounds of $\inf\limits_{\sigma \in S_n} \frac 1n \sum\limits_{k=1}^n d(T^k x,T^{\sigma(k)} y)$. And inequalities \eqref{eq4.3} and \eqref{eq4.4} show that to achieve this aim it suffices to estimate the bounds of $\inf\limits_{\sigma \in S_n} \frac 1n \sum\limits_{k=1}^n d(\hat{x}_k, \hat{y}_{\sigma(k)} )$,\\
		$\inf\limits_{\sigma \in S_n} \frac 1n \sum\limits_{k=1}^n d (\hat{x}_k, T^{\sigma(k)} x)$ and $\inf\limits_{\sigma \in S_n} \frac 1n \sum\limits_{k=1}^n d (\hat{y}_k, T^{\sigma(k)} y)$. In the following, Lemma \ref{lemma4.1} shows the upper bounds of $\inf\limits_{\sigma \in S_n} \frac 1n \sum\limits_{k=1}^n d (\hat{x}_k, T^{\sigma(k)} x)$ and $\inf\limits_{\sigma \in S_n} \frac 1n \sum\limits_{k=1}^n d (\hat{y}_k, T^{\sigma(k)} y)$, Lemma \ref{lemma4.2} shows the lower bound of $\inf\limits_{\sigma \in S_n} \frac 1n \sum\limits_{k=1}^n d(\hat{x}_k, \hat{y}_{\sigma(k)} )$ and Lemma \ref{lemma4.3} shows the upper bound of $\inf\limits_{\sigma \in S_n} \frac 1n \sum\limits_{k=1}^n d(\hat{x}_k, \hat{y}_{\sigma(k)} )$.
		
		Given $\beta > 0$. By Lemma \ref{lemma2.1}, for all sufficiently large $n \in \mathbb{N}^+$ we have
		 \begin{equation}\label{eq4.5}
		 \frac 1n \sum_{k=1}^n\chi_{\Lambda_r} (T^k x) \ge \mu_x (\Lambda_r) - \beta \ \ \text{and} \ \ \frac 1n \sum_{k=1}^n\chi_{V_s} (T^k y) \ge \mu_y (V_s) - \beta
		 \end{equation}
		 hold for any $r \in \{1, 2, \cdots, r_0\}$ and $s \in \{1, 2, \cdots, s_0\}$.

		\begin{lemma}\label{lemma4.1}
			For all sufficiently large $n \in \mathbb{N}^+$, we have
			\begin{equation*}
			\inf_{\sigma \in S_n} \frac 1n \sum_{k=1}^n d(\hat{x}_k, T^{\sigma(k)} x ) \le \varepsilon + M \varepsilon + M r_0 \beta \ \ \text{and} \ \ \inf_{\sigma \in S_n} \frac 1n \sum_{k=1}^n d(\hat{y}_k, T^{\sigma(k)} y ) \le \varepsilon + M \varepsilon + M s_0 \beta .
			\end{equation*}
		\end{lemma}
	
	  \begin{proof}[Proof of Lemma \ref{lemma4.1}]
	  	By \eqref{eq4.1}, we have $d(\hat{x}_k, T^k x) \le \varepsilon$ if $T^k x \in \bigcup_{r=1}^{r_0} \Lambda_r$. On the other hand, we can estimate $d(\hat{x}_k, T^k x) \le M$ for $diam(X) = M$. Then we have 
	  	\begin{equation}\label{eq4.6}
	  	\begin{split}
	  	& \hskip 0.5cm \inf_{\sigma \in S_n} \frac 1n \sum_{k=1}^n d(\hat{x}_k, T^{\sigma(k)} x ) \le \frac 1n \sum_{k=1}^n d(\hat{x}_k, T^k x ) \\
	  	& \le \varepsilon \cdot \frac {\#(\{k \in \mathbb{N}^+: T^k x \in \bigcup_{r=1}^{r_0} \Lambda_r, k\le n \})}n + M \cdot \frac {\#(\{k \in \mathbb{N}^+: T^k x \notin \bigcup_{r=1}^{r_0} \Lambda_r, k\le n \})}n \\
	  	& \le  \varepsilon + M \big(1- \frac {\#(\{k \in \mathbb{N}^+: T^k x \in \bigcup_{r=1}^{r_0} \Lambda_r, k\le n \})}n \big).	
	  	\end{split}
	  	\end{equation}
	  	Since $\{\Lambda_r \}_{r=1}^{r_0}$ are mutually disjoint, for sufficiently large $n \in \mathbb{N}^+$ we have
	  	\begin{equation}\label{eq4.7}
	  	\begin{split}
	  	\frac {\#(\{k \in \mathbb{N}^+: T^k x \in \bigcup_{r=1}^{r_0} \Lambda_r, k\le n \})}n 
	  	& = \sum_{r=1}^{r_0} \frac {\#(\{k \in \mathbb{N}^+ : T^k x \in \Lambda_r, k \le n\})}n \\
	  	& = \sum_{r=1}^{r_0} \frac 1n \sum_{k=1}^n \chi_{\Lambda_r} (T^k x) \\
	  	& \ge \sum_{r=1}^{r_0} \big( \mu_x (\Lambda_r) - \beta \big) \ \ \big({\text by \ \eqref{eq4.5}} \big) \\
	  	& = \sum_{r=1}^{r_0} \mu_x (\Lambda_r) -  r_0 \beta.	
	  	\end{split}
	  	\end{equation}
	  	Combining \eqref{eq4.6} with \eqref{eq4.7}, we derive that
	  	\begin{equation}\label{eq4.8}
	  	\begin{split}
	  	\inf_{\sigma \in S_n} \frac 1n \sum_{k=1}^n d(\hat{x}_k, T^{\sigma(k)} x ) 
	  	& \le  \varepsilon + M \big(1- \frac {\#(\{k \in \mathbb{N}^+: T^k x \in \bigcup_{r=1}^{r_0} \Lambda_r, k\le n \})}n \big) \\
	  	& \le \varepsilon + M \big(1-  \sum_{r=1}^{r_0} \mu_x (\Lambda_r) + r_0 \beta \big)\\
	  	& \le \varepsilon + M \varepsilon + M r_0 \beta,
	  	\end{split}
	  	\end{equation}
	  	where the last inequality comes from \eqref{eq4.1}.  Similarly, we have
	  	\begin{equation}\label{eq4.9}
	  	\inf_{\sigma \in S_n} \frac 1n \sum_{k=1}^n d(\hat{y}_k, T^{\sigma(k)} y ) \le \varepsilon + M \varepsilon + M s_0 \beta .
	  	\end{equation}
	  	This finishes the proof of Lemma \ref{lemma4.1}.
	  \end{proof}
	
	  To estimate the bounds of $\inf\limits_{\sigma \in S_n} \frac 1n \sum\limits_{k=1}^n d(\hat{x}_k, \hat{y}_{\sigma(k)} )$, we introduce some notations. Let $a_1= \min \{\mu_x(\Lambda_r),r=1,2,\cdots,r_0\}$, $a_2= \min \{\mu_y(V_s),s=1,2,\cdots,s_0\}$ and $a=\min \{\frac {\varepsilon}{r_0+s_0}, a_1, a_2\}$. Then for any $r \in \{1,2,\cdots,r_0\}$ and any $s\in \{1,2,\cdots,s_0\}$, 
	  there are $n_r, m_s \in \mathbb{N^+}$ such that
	  \begin{equation}\label{eq4.10}
	  an_r \le \mu_x(\Lambda_r) <a(n_r+1) 
	  \end{equation}
	  and
	  \begin{equation}\label{eq4.11}
	  am_s \le \mu_y(V_s) <a(m_s+1).
	  \end{equation}

	  Let $K=\min\{\sum\limits_{r=1}^{r_0}n_r,\sum\limits_{s=1}^{s_0}m_s\}$. Without loss of generality, we can assume $K=\sum\limits_{r=1}^{r_0}n_r \le \sum\limits_{s=1}^{s_0}m_s$. Then there is $s_0^* \le s_0$ and integer sequence $\{m_s^*\}_{s=1}^{s_0^*}$ such that 
	  \begin{equation}\label{eq4.12}
	  1 \le m_s^* \le m_s, {\forall}s=1,2,\cdots,s_0^*
	  \end{equation}
	  and 
	  \begin{equation}\label{eq4.13}
	  K=\sum\limits_{s=1}^{s_0^*}m_s^*.
	  \end{equation}
	  By \eqref{eq4.1} we derive that
	  \begin{equation}\label{eq4.14}
	  \begin{split}
	  1- Ka 
	  &\le \varepsilon + \sum_{r=1}^{r_0} \mu_x (\Lambda_r) - a \sum_{r=1}^{r_0} n_r\\
	  &= \varepsilon + \sum_{r=1}^{r_0} \big(\mu_x (\Lambda_r) -a n_r \big)\\
	  &< \varepsilon + r_0 a  \ \ \big({\text by \ \eqref{eq4.10} \big)  } \\
	  &\le 2 \varepsilon,
	  \end{split}
	  \end{equation} 
	  where the last inequality comes from $a \le \frac {\varepsilon}{r_0 + s_0}$. 
	  
	   Construct the sequences $\{\overline{x}_i \}_{i=1}^K$ and $\{\overline{y}_i \}_{i=1}^K$ as follows:
	  \begin{equation*} 
	  \overline{x}_i = \begin{cases}
	  x_1, & i \le n_1\\
	  x_{r+1}, & \sum\limits_{j=1}^r n_j < i \le \sum\limits_{j=1}^{r+1} n_j, r=1, 2, \cdots, r_0 - 1,
	  \end{cases}
	  \end{equation*}
	  \begin{equation*} 
	  \overline{y}_i = \begin{cases}
	  y_1, & i \le m_1^*\\
	  y_{s+1}, & \sum\limits_{j=1}^s m_j^* < i \le \sum\limits_{j=1}^{s+1} m_j^*, s=1, 2, \cdots, s_0^* - 1.
	  \end{cases}
	  \end{equation*}
	  In the following we will use the distance between sequences $\{\overline{x}_i \}_{i=1}^K$ and $\{\overline{y}_i \}_{i=1}^K$ to estimate the bounds of $\inf\limits_{\sigma \in S_n} \frac 1n \sum\limits_{k=1}^n d(\hat{x}_k, \hat{y}_{\sigma(k)} )$. 
	  
	  \begin{lemma}\label{lemma4.2}
	  For all sufficiently large $n \in \mathbb{N}^+$, we have
	  \begin{equation*}
	  \inf\limits_{\sigma \in S_n} \frac 1n \sum\limits_{k=1}^n d(\hat{x}_k, \hat{y}_{\sigma(k)} ) \ge (a-\beta) \cdot \inf\limits_{\sigma \in S_K} \sum\limits_{k=1}^K d(\overline{x}_k, \overline{y}_{\sigma(k)}) - \frac {MK}n- 2M\varepsilon - MK \beta.
	  \end{equation*}
      \end{lemma}
	 
	 \begin{proof}[Proof of Lemma \ref{lemma4.2}]
	 	Let $l_n$ be the minimal integer such that $l_n \ge (a-\beta)n$. Then by \eqref{eq4.14} we have 
	 	\begin{equation} \label{eq4.15}
	 	\frac {n-K l_n}n \le 1- K(a- \beta)  \le 2 \varepsilon +K \beta.
	 	\end{equation}
	 	
	 	{\bf Claim 1.} For all sufficiently large $n \in \mathbb{N}^+$, we have
	 	\begin{equation*}
	 	\#(\{k| \hat{x}_k = x_r, k\le n \}) \ge l_n n_r \ \ \text{and} \ \  \#(\{k| \hat{y}_k = y_s, k\le n \}) \ge l_n m^*_s
	 	\end{equation*}
	 	hold for any $r \in \{1, 2, \cdots, r_0 \}$ and any $s\in \{1, 2, \cdots, s_0^* \}$.
	 	
	 	\begin{proof}[Proof of Claim 1]
	 		Combining \eqref{eq4.5} with \eqref{eq4.10}, for sufficiently large $n \in \mathbb{N}^+$ we have
	 		\begin{equation*}
	 		\frac 1n \sum_{k=1}^n \chi_{\Lambda_r} (T^k x) \ge \mu_x (\Lambda_r) - \beta \ge a n_r - \beta
	 		\end{equation*}
	 		holds for any $r \in \{1, 2, \cdots, r_0 \}$.
	 		
	 		If $n_r = 1$, we have
	 		\begin{equation*}
	 		\sum_{k=1}^n \chi_{\Lambda_r} (T^k x) \ge a n n_r - \beta n = (a - \beta) n.
	 		\end{equation*}
	 		Since $\sum\limits_{k=1}^n \chi_{\Lambda_r} (T^k x)$ is integer and $l_n$ is the minimal integer such that $l_n \ge (a- \beta) n$, we derive that
	 		\begin{equation*}
	 		\#(\{k| \hat{x}_k = x_r, k\le n \}) = \sum_{k=1}^n \chi_{\Lambda_r} (T^k x) \ge l_n = l_n n_r.
	 		\end{equation*}	
	 	
	 		If $n_r >1$, for sufficiently large $n \in \mathbb{N}^+$ we have 
	 		\begin{equation*}
	 		\beta n (n_r -1) - n_r > 0.
	 		\end{equation*}
	 		Then we deduce that 
	 		\begin{equation*}
	 		\begin{split}
	 		\sum_{k=1}^n \chi_{\Lambda_r} (T^k x) 
	 		& \ge a n n_r - \beta n\\
	 		& = l_n n_r -l_n n_r + a n n_r - \beta n n_r + \beta n (n_r -1)\\
	 		& = l_n n_r + \beta n (n_r -1)- \big(l_n - (a - \beta) n \big)n_r\\
	 		& \ge l_n n_r + \beta n (n_r -1)- n_r\\
	 		& > l_n n_r,
	 		\end{split}
	 		\end{equation*}
	 		which is 
	 		\begin{equation*}
	 		\#(\{k| \hat{x}_k = x_r, k\le n \}) = \sum_{k=1}^n \chi_{\Lambda_r} (T^k x) > l_n n_r.
	 		\end{equation*}
	 		
	 		Similarly, we can prove that for sufficiently large $n \in \mathbb{N}^+$, we have
	 		\begin{equation*}
	 	    \#(\{k| \hat{y}_k = y_s, k\le n \}) \ge l_n m^*_s
	 		\end{equation*}
	 		holds for any $s\in \{1, 2, \cdots, s_0^* \}$.
	 	\end{proof}
	 	
	 	By Claim 1, for sufficiently large $n \in \mathbb{N}^+$ there is a set $A \subset \{1, 2, \cdots, n \}$ such that 
	 	\begin{equation}\label{eq4.16}
	 	\# (A) = K l_n
	 	\end{equation}
	 	and
	 	\begin{equation}\label{eq4.17}
	 	\# (\{k| \hat{x}_k = x_r, k \in A \}) = l_n n_r
	 	\end{equation}
	 	holds for any $r \in \{1, 2, \cdots, r_0 \}$.
	 	
	 	Let $\sigma_1 \in S_n$ such that 
	 	\begin{equation*}
	 	\frac 1n \sum\limits_{k=1}^n d(\hat{x}_k, \hat{y}_{\sigma_1 (k)} ) = \inf\limits_{\sigma \in S_n} \frac 1n \sum\limits_{k=1}^n d(\hat{x}_k, \hat{y}_{\sigma(k)} ).
	 	\end{equation*}
	 	For any $s\in \{1,2,\cdots, s_0^*\}$, we denote 
	 	\begin{equation*}
	 	b_s= \#(\{k \le n| \hat{y}_{\sigma_1 (k)} = y_s, k \notin A \} ).
	 	\end{equation*}
	 	Then by \eqref{eq4.16}, we have
	 	\begin{equation} \label{eq4.18}
	 	\sum_{s=1}^{s_0^*} b_s \le n - \#(A) = n-K l_n.
	 	\end{equation}
	 	
     	For any $s\in \{1,2,\cdots, s_0^*\}$ and sufficiently large $n \in \mathbb{N}^+$, we have
	 	\begin{equation*}
	 	\begin{split}
	 	&\hskip0.5cm \frac {\#(\{k \in \mathbb{N^+}| \hat{y}_{\sigma_1 (k)} = y_s, k \in A \}) + b_s}n
	 	= \frac {\#(\{k \in \mathbb{N^+}| \hat{y}_{\sigma_1 (k)} = y_s, k \le n \} )}n\\
	 	&= \frac {\#(\{k \in \mathbb{N^+}| \hat{y}_k = y_s, k \le n \} )}n
	 	=\frac 1n \sum_{k=1}^n\chi_{V_s}(T^k y)\\
	 	&\ge \mu_y (V_s) - \beta  \ \ \big({\text by \ \eqref{eq4.5} \big)  } \\
	 	&\ge m_s a - \beta  \ \ \big({\text by \ \eqref{eq4.11} \big)  } \\
	 	& \ge m_s^* (a - \beta),
	 	\end{split}
	 	\end{equation*}
	 	where the last inequality follows from \eqref{eq4.12}. Therefore, 
	 	\begin{equation}\label{eq4.19}
	 	\begin{split}
	 	\frac {\#(\{k \in \mathbb{N^+}| \hat{y}_{\sigma_1 (k)} = y_s, k \in A \})}n
	 	& \ge m_s^* (a - \beta) - \frac {b_s}n.
	 	\end{split}
	 	\end{equation}
	 	
	 	By Claim 1 and \eqref{eq4.19}, for sufficiently large $n \in \mathbb{N}^+$ there is a $\sigma_2 \in S_n$ such that 
	 	\begin{equation}\label{eq4.20}
	 	\#(\{k| \hat{y}_{\sigma_2 (k)} = y_s, k\in A\}) = l_n m_s^* 
	 	\end{equation}
	 	and
	 	\begin{equation*}
	 	\begin{split}
	 	\frac {\#(\{k \in \mathbb{N^+}| \sigma_2 (k) = \sigma_1 (k),  \hat{y}_{\sigma_1 (k)} = y_s, k \in A \})}n
	 	& \ge m_s^* (a - \beta) - \frac {b_s}n
	 	\end{split}
	 	\end{equation*}
	 	hold for any $s \in \{1,2, \cdots, s_0^*\}$. Since $\# (A) = K l_n$, we deduce that
	 	\begin{equation*}
	 	\begin{split}
	 	\# (\{ k \in \mathbb{N}^+ | \sigma_2 (k) \not= \sigma_1 (k), k \in A \} ) 
	 	& =  \#(A) - \#(\{k \in \mathbb{N^+}| \sigma_2 (k) = \sigma_1 (k), k \in A \}) \\
	 	& \le K l_n - \sum_{s=1}^{s_0^*} \#(\{k \in \mathbb{N^+}| \sigma_2 (k) = \sigma_1 (k),  \hat{y}_{\sigma_1 (k)} = y_s, k \in A \}) \\
	 	& \le K l_n - n \sum_{s=1}^{s_0^*} \big( m_s^* (a - \beta) - \frac {b_s}n \big)\\
	 	& = K l_n - \sum_{s=1}^{s_0^*} m_s^* (a - \beta) n + \sum_{s=1}^{s_0^*} b_s \\
	 	& = K \big(l_n -  (a - \beta) n \big) + \sum_{s=1}^{s_0^*} b_s \ \ (\text{by \eqref{eq4.13} })\\
	 	& \le K + n - K l_n,
	 	\end{split}
	 	\end{equation*}
	 	where the last inequality comes from \eqref{eq4.18} and the fact that $l_n$ is the minimal integer such that $l_n \ge (a - \beta) n$. Then we derive that
	 	\begin{equation*}
	 	\begin{split}
	 	\sum_{k\in A} d(\hat{x}_k, \hat{y}_{\sigma_2 (k)})
	 	& = \sum_{k\in A, \sigma_1 (k) = \sigma_2 (k)} d(\hat{x}_k, \hat{y}_{\sigma_2 (k)}) +
	 	\sum_{k\in A, \sigma_1 (k) \not= \sigma_2 (k)} d(\hat{x}_k, \hat{y}_{\sigma_2 (k)}) \\
	 	& \le \sum_{k\in A} d(\hat{x}_k, \hat{y}_{\sigma_1 (k)}) + M \cdot \# (\{ k \in \mathbb{N}^+ | \sigma_2 (k) \not= \sigma_1 (k), k \in A \} ) \\	
	 	& \le \sum_{k\in A} d(\hat{x}_k, \hat{y}_{\sigma_1 (k)}) + M(K + n - K l_n). 
	 	\end{split}
	 	\end{equation*}
	 	This shows 
	 	\begin{equation*}
	 	\begin{split}
	 	\frac 1n \sum_{k\in A} d(\hat{x}_k, \hat{y}_{\sigma_2 (k)}) 
	 	& \le \frac 1n \sum_{k\in A} d(\hat{x}_k, \hat{y}_{\sigma_1 (k)}) + \frac {MK}n + \frac {M(n- K l_n)}n \\
	 	& \le \frac 1n \sum_{k\in A} d(\hat{x}_k, \hat{y}_{\sigma_1 (k)}) + \frac {MK}n + 2M \varepsilon + MK \beta,
	 	\end{split}
	 	\end{equation*}
	 	where the last inequality comes from \eqref{eq4.15}. 
	 	
	 	Let $B = \{\sigma_2(k) | k \in A \}$ and $H(A,B) = \{ \sigma \in S_n | \sigma (k) \in B, \forall k \in A \}$. Then $\sigma_2 \in H(A,B)$ and we have
	 	\begin{equation}\label{eq4.21}
	 	\begin{split}
	 	\inf_{\sigma \in S_n } \frac 1n \sum_{k =1}^n d(\hat{x}_k, \hat{y}_{\sigma (k)})
	 	&= \frac 1n \sum_{k=1}^n d(\hat{x}_k, \hat{y}_{\sigma_1 (k)}) \ge \frac 1n \sum_{k\in A} d(\hat{x}_k, \hat{y}_{\sigma_1 (k)})\\
	 	& \ge \frac 1n \sum_{k\in A} d(\hat{x}_k, \hat{y}_{\sigma_2 (k)}) - \frac {MK}n - 2M \varepsilon - MK \beta \\
	 	& \ge \inf_{\sigma \in H(A,B) } \frac 1n \sum_{k\in A} d(\hat{x}_k, \hat{y}_{\sigma (k)}) - \frac {MK}n - 2M \varepsilon - MK \beta.
	 	\end{split}
	 	\end{equation} 
	 	
	 	By \eqref{eq4.17}, \eqref{eq4.20} and Lemma \ref{BN_lemma}, we can derive that
	 	\begin{equation}\label{eq4.22}
	 	\inf_{\sigma \in H(A,B) } \sum_{k\in A} d(\hat{x}_k, \hat{y}_{\sigma(k)}) = l_n \cdot \inf_{\sigma \in S_K} \sum_{k=1}^K d(\overline{x}_k, \overline{y}_{\sigma(k)}).
	 	\end{equation}
	 	Combining \eqref{eq4.21} with \eqref{eq4.22}, we estimate the lower bound of $\inf\limits_{\sigma \in S_n } \frac 1n \sum\limits_{k =1}^n d(\hat{x}_k, \hat{y}_{\sigma (k)})$ as follows:
	 	\begin{equation*}
	 	\begin{split}
	 	\inf_{\sigma \in S_n } \frac 1n \sum_{k =1}^n d(\hat{x}_k, \hat{y}_{\sigma (k)})
	 	& \ge \frac {l_n}n \cdot \inf_{\sigma \in S_K} \sum_{k=1}^K d(\overline{x}_k, \overline{y}_{\sigma(k)}) - \frac {MK}n - 2M \varepsilon - MK \beta \\
	 	& \ge ( a - \beta ) \cdot \inf_{\sigma \in S_K} \sum_{k=1}^K d(\overline{x}_k, \overline{y}_{\sigma(k)}) - \frac {MK}n - 2M \varepsilon - MK \beta.
	 	\end{split} 
	 	\end{equation*}
	 	This finishes the proof of Lemma \ref{lemma4.2}.
	 \end{proof}
	
	   \begin{lemma}\label{lemma4.3}
	  	For all sufficiently large $n \in \mathbb{N}^+$, we have
	  	\begin{equation*}
	  	\inf\limits_{\sigma \in S_n} \frac 1n \sum\limits_{k=1}^n d(\hat{x}_k, \hat{y}_{\sigma(k)} ) \le (a- \beta) \cdot \inf\limits_{\sigma \in S_K}\sum\limits_{k=1}^K d(\overline{x}_k, \overline{y}_{\sigma(k)})  + \frac {MK}n +  2M \varepsilon +MK \beta.
	  	\end{equation*}
	  \end{lemma}
      
      \begin{proof}[Proof of Lemma \ref{lemma4.3}]
      	 Let $\sigma_K \in S_K$ such that
      	\begin{equation*}
      	\sum\limits_{k=1}^K d(\overline{x}_k, \overline{y}_{\sigma_ K (k)}) = \inf\limits_{\sigma \in S_K}\sum\limits_{k=1}^K d(\overline{x}_k, \overline{y}_{\sigma(k)}).
      	\end{equation*}
      	For sufficiently large $n \in \mathbb{N}^+$, Claim 1 shows that there is a partition $\{A_i\}_{i=1}^{K+1}$ of $\{1,2, \cdots,n \}$ such that
      	\begin{equation}\label{eq4.23}
      	{\#(A_i)} = l_n \ \ \text{and} \ \
      	\{\hat{x}_k| k\in A_i\} = \{\overline{x}_i\}
      	\end{equation}
      	hold for any $i \in \{1,2, \cdots,K \}$.
      	Similarly, there is a partition $\{B_i\}_{i=1}^{K+1}$ of $\{1,2, \cdots,n \}$ such that
      	\begin{equation}\label{eq4.24}
      	{\#(B_i)} = l_n  \ \ \text{and} \ \
      	\{\hat{y}_k| k\in B_i\} = \{\overline{y}_{\sigma_K(i)}\}
      	\end{equation}
      	hold for any $i \in \{1,2, \cdots,K \}$.
      	
      	By \eqref{eq4.23} and \eqref{eq4.24}, there exists $\sigma_n \in S_n$ such that 
      	\[
        B_i = \{\sigma_n (k) | k \in A_i \}, \ \forall i =1,2, \cdots,K.
      	\] 
      	Thus for any $i \in \{1,2, \cdots,K \}$, we have
      	\begin{equation*}
      	\sum_{k\in A_i} d(\hat{x}_k, \hat{y}_{\sigma_{n}(k)} ) = l_n \cdot d(\overline{x}_i, \overline{y}_{\sigma_K(i)}).
      	\end{equation*}
      	Hence, we deduce that
      	\begin{equation*}
      	\begin{split}
      	\sum_{k=1}^n d(\hat{x}_k, \hat{y}_{\sigma_{n}(k)} ) 
      	&=\sum_{i=1}^{K+1}  \sum_{k\in A_i} d(\hat{x}_k, \hat{y}_{\sigma_{n}(k)} ) \\
      	& = \sum_{i=1}^K  l_n \cdot d(\overline{x}_i, \overline{y}_{\sigma_K(i)}) + \sum_{k \in A_{K+1} }  d(\hat{x}_k, \hat{y}_{\sigma_{n}(k)} )\\
      	& \le l_n \cdot \sum_{i=1}^K d(\overline{x}_i, \overline{y}_{\sigma_K(i)}) + M ( n - K l_n) \\
      	& \le (l_n -1) \cdot \sum_{k=1}^K d(\overline{x}_k, \overline{y}_{\sigma_K(k)}) + MK+ M (n - K l_n).
      	\end{split}
      	\end{equation*}
      	Since $l_n - 1 < (a - \beta)n $, we have
      	\begin{equation*}
        \begin{split}
        \frac 1n \sum_{k=1}^n d(\hat{x}_k, \hat{y}_{\sigma_{n}(k)} ) 
        & \le (a- \beta) \cdot \sum_{k=1}^K d(\overline{x}_k, \overline{y}_{\sigma_K(k)})  + \frac {MK}n + \frac {M(n - K l_n)}n \\
        & \le (a- \beta) \cdot \sum_{k=1}^K d(\overline{x}_k, \overline{y}_{\sigma_K(k)})  + \frac {MK}n + 2 M\varepsilon + MK \beta,
        \end{split}
      	\end{equation*}
      	where the last inequality comes from \eqref{eq4.15}. Hence, we can estimate the upper bound of \\
      	$\inf\limits_{\sigma \in S_n} \frac 1n \sum\limits_{k=1}^n d(\hat{x}_k, \hat{y}_{\sigma(k)} )$ as follows:
      	\begin{equation*}
      	\begin{split}
      	\inf_{\sigma \in S_n} \frac 1n \sum_{k=1}^n d(\hat{x}_k, \hat{y}_{\sigma(k)} ) 
      	&\le \frac 1n \sum_{k=1}^n d(\hat{x}_k, \hat{y}_{\sigma_{n}(k)} ) \\
      	&\le (a- \beta) \cdot \inf_{\sigma \in S_K} \sum_{k=1}^K d(\overline{x}_k, \overline{y}_{\sigma(k)})  + \frac {MK}n + 2M \varepsilon +MK \beta.
      	\end{split}
      	\end{equation*}
      	This finishes the proof of Lemma \ref{lemma4.3}.
      \end{proof}
  
      Given sufficiently large $n \in \mathbb{N}^+$. By \eqref{eq4.3}, Lemma \ref{lemma4.1} and Lemma \ref{lemma4.3}, we deduce that
      \begin{equation}\label{eq4.25}
      \begin{split}
      &\hskip 0.5cm \inf_{\sigma \in S_n} \frac 1n \sum_{k=1}^n d(T^k x,T^{\sigma(k)} y)\\
      &\le (a- \beta) \cdot \inf_{\sigma \in S_K}\sum_{k=1}^K d(\overline{x}_k, \overline{y}_{\sigma(k)})  + \frac {MK}n + 2M \varepsilon +MK \beta + 2 \varepsilon +2M \varepsilon +M(r_0 + s_0) \beta.
      \end{split}
      \end{equation}
      On the other hand, by \eqref{eq4.4}, Lemma \ref{lemma4.1} and Lemma \ref{lemma4.2}, we derive that
      \begin{equation}\label{eq4.26}
      \begin{split}
      &\hskip 0.5cm \inf_{\sigma \in S_n} \frac 1n \sum_{k=1}^n d(T^k x,T^{\sigma(k)} y)\\
      &\ge (a-\beta) \cdot \inf_{\sigma \in S_K} \sum_{k=1}^K d(\overline{x}_k, \overline{y}_{\sigma(k)}) - \frac {MK}n- 2M\varepsilon - MK \beta - 2 \varepsilon -2M \varepsilon - M(r_0 + s_0) \beta.
      \end{split}
      \end{equation}
      
      Let $n \rightarrow +\infty$, and from \eqref{eq4.25} and \eqref{eq4.26} we deduce that
      \begin{equation*}
      \overline{F}(x,y) \le (a- \beta) \cdot \inf_{\sigma \in S_K}\sum_{k=1}^K d(\overline{x}_k, \overline{y}_{\sigma(k)}) + 2M \varepsilon +MK \beta + 2 \varepsilon +2M \varepsilon +M(r_0 + s_0) \beta
      \end{equation*}
	  and
	  \begin{equation*}
	  \underline{F}(x,y) \ge  (a-\beta) \cdot \inf_{\sigma \in S_K} \sum_{k=1}^K d(\overline{x}_k, \overline{y}_{\sigma(k)}) - 2M\varepsilon - MK \beta - 2 \varepsilon -2M \varepsilon -M(r_0 + s_0) \beta.
	  \end{equation*}
	  This shows 
	  \begin{equation*}
	  \overline{F}(x,y) - \underline{F}(x,y) \le 8M \varepsilon +2MK \beta + 4 \varepsilon  +2M(r_0 + s_0) \beta.
	  \end{equation*}
	  
	  Let $\beta \to 0$, then we derive that
	  \begin{equation*}
	  \overline{F}(x,y) - \underline{F}(x,y) \le 8M \varepsilon + 4 \varepsilon= \frac {\alpha}2,
	  \end{equation*}
	  which conflicts with the assumption.  This shows that $F(x,y)$ exists.
	  \end{proof}
	  
	  \section{Invariant measures}\label{5}
	  In this section, we study invariant measures by functions $\overline{F}(x,y)$ and $\underline{F}(x,y)$. And then we prove Theorem \ref{1.3}. Theorem \ref{1.3} is a direct corollary of Theorem \ref{thm5.1} and Theorem \ref{thm5.3}. The following proposition shows that $F(x,y) = 0$ implies the measure sets generated by $x$ and $y$ are the same.

	  \begin{prop}\label{p5.1}
	  	Let $(X,T)$ be a t.d.s. Then $M_x=M_y$ holds for any $x,y \in X$ with $F(x,y)=0$.
	  \end{prop}
	  \begin{proof}
	  	Given $x, y \in X$ with $F(x,y) = 0$. To prove $M_x = M_y$, it suffices to show $M_x \subset M_y$ and $M_y \subset M_x$. In the following, we will prove that $M_x \subset M_y$. With the same reason, $M_y \subset M_x$ holds.
	  	
	  	Given $\mu \in M_x$, there is a subsequence $\{n_r\}_{r=1}^{\infty}$ of positive integers $\mathbb{N^{+}}$ such that for any $f\in C(X)$, we have
	  	\begin{equation}\label{eq5.1}
	  	\lim_{r \to +\infty} \frac 1{n_r}\sum_{k=1}^{n_r}f(T^k x) = \int_X f \mathrm {d}{\mu}.
	  	\end{equation}
	  	
	  	Given $f \in C(X)$ and $\varepsilon > 0$. There is $\delta=\delta(\varepsilon, f) > 0$ such that whenever $x_1, x_2 \in X$ with $d(x_1,x_2) < \delta$, we have
	  	\begin{equation}\label{eq5.2}
	  	|f(x_1)-f(x_2)| < \varepsilon.
	  	\end{equation}
	  	
	  	Let $L = \max\limits _{z \in X}\{|f(z)|\}$. Given $\sigma \in S_{n_r}$, we have 
	  	\begin{equation*}
	  	\begin{split}
	  	& \hskip 0.5cm |\frac1{n_r} \sum_{k=1}^{n_r} f(T^k x) - \frac1{n_r} \sum_{k=1}^{n_r} f(T^k y)|\\ 
	  	& = \frac 1{n_r} |\sum_{k=1}^{n_r} \big(f(T^k x) - f(T^{\sigma(k)} y)\big)|
	  	\leq \frac 1{n_r} \sum_{k=1}^{n_r} |f(T^k x) - f(T^{\sigma(k)} y)|\\
	  	&\leq \varepsilon \times \frac {\# \big(\{k \in \mathbb{N^{+}}|d(T^k x,T^{\sigma(k)} y) < \delta, k \le n_r \}\big)}{n_r} \ \  \big({\text by \ \eqref{eq5.2}} \big) \\
	  	& \hskip 0.5cm +2L \times \frac {\# \big(\{k \in \mathbb{N^{+}}|d(T^k x,T^{\sigma(k)} y) \geq \delta, k \le n_r \}\big)}{n_r} \\
	  	& \le \varepsilon +2L \times \frac {\# \big(\{k \in \mathbb{N^{+}}|d(T^k x,T^{\sigma(k)} y) \geq \delta, k \le n_r \}\big)}{n_r}.
	  	\end{split}
	  	\end{equation*}
	  	Since
	  	\[
	  	\frac 1{n_r} \sum_{k=1}^{n_r} d(T^k x,T^{\sigma(k)} y) \geq \delta \times \frac {\# \big( \{k \in \mathbb{N^{+}}|d(T^k x,T^{\sigma(k)} y) \geq \delta, k \le n_r \} \big)}{n_r},
	  	\]
	  	we deduce that
	  	\[
	  	|\frac1{n_r} \sum_{k=1}^{n_r} f(T^k x) - \frac1{n_r} \sum_{k=1}^{n_r} f(T^k y)| \leq \varepsilon + \frac {2L}{\delta} \times \frac 1{n_r} \sum_{k=1}^{n_r} d(T^k x,T^{\sigma(k)} y).
	  	\]
	  	Thus we have
	  	\[
	  	|\frac1{n_r} \sum_{k=1}^{n_r} f(T^k x) - \frac1{n_r} \sum_{k=1}^{n_r} f(T^k y)| \leq \varepsilon + \frac {2L}{\delta} \times \inf_{\sigma \in S_{n_r}} \frac 1{n_r} \sum_{k=1}^{n_r} d(T^k x,T^{\sigma(k)} y).
	  	\]
	  	Let $r \to +\infty$, then we have
	  	\begin{equation*}
	  	\begin{split}
	  	\limsup_{r \to +\infty} |\frac1{n_r} \sum_{k=1}^{n_r} f(T^k x) - \frac1{n_r} \sum_{k=1}^{n_r} f(T^k y)| 
	  	& \leq \varepsilon + \frac {2L}{\delta} \times \limsup_{r \to +\infty} \inf_{\sigma \in S_{n_r}} \frac 1{n_r} \sum_{k=1}^{n_r} d(T^k x,T^{\sigma(k)} y) \\
	  	& = \varepsilon + \frac {2L}{\delta} \times F(x,y)  = \varepsilon.
	  	\end{split}
	  	\end{equation*}
	  	Let $\varepsilon \to 0$, then we deduce that
	  	\[
	  	\limsup_{r \to +\infty} |\frac1{n_r} \sum_{k=1}^{n_r} f(T^k x) - \frac1{n_r} \sum_{k=1}^{n_r} f(T^k y)| = 0.
	  	\]
	  	Combining this with \eqref{eq5.1}, we have
	  	\[
	  	\lim_{r \to +\infty} \frac 1{n_r}\sum_{k=1}^{n_r}f(T^k y) =\lim_{r \to +\infty} \frac 1{n_r}\sum_{k=1}^{n_r}f(T^k x) = \int_X f \mathrm {d}{\mu},
	  	\]
	  	which implies $\mu \in M_y$. Therefore, $M_x \subset M_y$. This finishes the proof of Proposition \ref{p5.1}.
	  \end{proof}
	  
	      With respect to $\underline{F}(x,y)$, we have the following result similar to Proposition \ref{p5.1}.
	  \begin{prop}\label{p5.2}
	  	Let $(X,T)$ be a t.d.s. Then $M_x \cap M_y \not= \O$ for any $x, y \in X$ with $\underline{F}(x,y)=0$.
	  \end{prop}
	  \begin{proof}
	  	Given $x,y \in X$ with $\underline{F}(x,y)=0$. There is a subsequence $\{n_r\}_{r=1}^{\infty}$ of positive integers $\mathbb{N^{+}}$ such that
	  	\[
	  	\lim_{r \to +\infty} \inf_{\sigma \in S_{n_r}} \frac 1{n_r} \sum_{k=1}^{n_r} d(T^k x,T^{\sigma(k)} y) = 0.
	  	\]
	  	Without loss of generality, we can assume that there exists $\mu \in M_x$ such that 
	  	\[
	  	\lim_{r \to +\infty} \frac 1{n_r}\sum_{k=1}^{n_r}f(T^k x) = \int_X f \mathrm {d}{\mu}
	  	\]
	  	holds for any $f \in C(X)$. 
	  	
	  	With the same reason in the proof of Proposition \ref{p5.1}, we derive that $\mu \in M_y$. Thus, $M_x \bigcap M_y \not= \O$.
	  \end{proof}
	
		When $x \in X$ is a generic point of $(X,T)$, we can strengthen the Proposition~\ref{p5.1} as follows.
		\begin{prop}\label{p5.3}
			Let $(X,T)$ be a  t.d.s. and $x,y \in X$. If $x$ is a generic point of $(X,T)$, then $F(x,y)=0$ if and only if $M_x=M_y$.
		\end{prop}
		\begin{proof}
			Proposition \ref{p5.1} shows that $F(x,y)=0$ implies $M_x=M_y$. To finish the proof of Proposition \ref{p5.3}, we need only to prove that $M_x=M_y$ implies $F(x,y)=0$.
			
			Let $\mu= \lim\limits_{n \to +\infty} \frac 1n \sum\limits_{k=1}^n \delta_{T^k x}$, then $M_x= M_y= \{\mu\}$. Given $\varepsilon > 0$, and let $\eta = \frac {\varepsilon}{1+M}$, where $M=diam(X)$. By Lemma~\ref{measure lemma}, there are mutually disjoint open sets $\{U_s \}_{s=1}^{s_0}$
			such that
			\begin{equation*}
			\mu(\bigcup_{s=1}^{s_0} U_s) \geq 1- \eta \ \ \text{and} \ \ diam(U_s) \leq \eta, {\forall}s=1,2,\cdots,s_0.
			\end{equation*}
		
			Combining Lemma \ref{lemma2.1} with Proposition \ref{P3.4}, we have
			\begin{equation*}
			\overline{F} (x,y) \le \eta \sum\limits_{s=1}^{s_0} \mu(U_s) + M \big(1- \sum\limits_{s=1}^{s_0} \mu(U_s) \big) \le \eta + M \eta =\varepsilon .
			\end{equation*}
			Let $\varepsilon \to 0$, then we deduce that $\overline{F} (x,y) = 0$. This shows $F(x,y) = 0$.
		\end{proof}
	
	    Applying Proposition~\ref{p5.3}, we have the following theorem.
	    
	    \begin{theorem}\label{thm5.1}
	    	Let $(X,T)$ be a t.d.s. Then $(X,T)$ is uniquely ergodic if and only if $N(F) = X \times X$.
	    \end{theorem}
	    \begin{proof}
	    	If $(X,T)$ is uniquely ergodic and $\mu$ is the unique ergodic measure. Then for any $x,y \in X$, we have $M_x=M_y=\{\mu \}$, which implies that $x$ and $y$ are generic points. By Proposition \ref{p5.3}, we derive that $F(x,y)=0$. Thus $(x,y) \in N(F)$. Hence $N(F)=X \times X$.
	    	
	    	If $N(F)= X \times X$. Let $\mu_1$ and $\mu_2$ be ergodic measures on $(X,T)$. By Birkhoff pointwise ergodic theorem, there exist $x,y \in X$ such that $M_x = \{\mu_1\}$ and $M_y= \{\mu_2\}$. Since $N(F)=X \times X$, we have $F(x,y)=0$. By Proposition \ref{p5.3}, we deduce that $M_x = M_y$, which implies $\mu_1 = \mu_2$. Thus, $(X,T)$ is uniquely ergodic. 
	    \end{proof}
    
        When $(X,T)$ is a transitive weak mean equicontinuous system, we can deduce that $N(F) = X\times X$. Thus by Theorem \ref{thm5.1}, we have that a transitive weak mean equicontinuous system is uniquely ergodic.
        
        \begin{corollary}
        	Let $(X,T)$ be a transitive weak mean equicontinuous t.d.s. Then $(X,T)$ is uniquely ergodic. In particular, a transitive mean equicontinuous system is uniquely ergodic.
        \end{corollary}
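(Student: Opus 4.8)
The plan is to show that transitivity together with weak mean equicontinuity forces $N(F) = X \times X$, after which unique ergodicity follows immediately from Theorem~\ref{thm4.1}. Since $(X,T)$ is weak mean equicontinuous, it is $\overline{F}$-continuous, and by the compactness remark recorded earlier this continuity is uniform: for every $\varepsilon > 0$ there is a $\delta > 0$ such that $d(u,v) < \delta$ implies $\overline{F}(u,v) < \varepsilon$ for all $u,v \in X$.

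First I would fix a transitive point $z \in X$, i.e.\ a point whose forward orbit is dense. Observe that $\overline{F}(z,z) = 0$ trivially (take $\sigma$ to be the identity), so by Proposition~\ref{P3.2} we have $\overline{F}(T^n z, z) = \overline{F}(z,z) = 0$ and likewise $\overline{F}(z, T^n z) = 0$ for every $n \in \mathbb{N}$. Now given an arbitrary $w \in X$ and any $\varepsilon > 0$, choose $\delta$ from the uniform $\overline{F}$-continuity and then, using density of the orbit of $z$, pick $n$ with $d(T^n z, w) < \delta$. The triangle inequality of Proposition~\ref{P3.1}(4) gives $\overline{F}(z,w) \le \overline{F}(z, T^n z) + \overline{F}(T^n z, w) = \overline{F}(T^n z, w) < \varepsilon$, and letting $\varepsilon \to 0$ yields $\overline{F}(z,w) = 0$ for every $w \in X$.

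Next I would upgrade this to arbitrary pairs. For any $x,y \in X$, combining symmetry (Proposition~\ref{P3.1}(3)) with the triangle inequality gives $\overline{F}(x,y) \le \overline{F}(x,z) + \overline{F}(z,y) = \overline{F}(z,x) + \overline{F}(z,y) = 0$. Hence $\overline{F}(x,y) = 0$ for all $x,y$, so $N(\overline{F}) = X \times X$; since $N(\overline{F}) = N(F)$, we conclude $N(F) = X \times X$, and Theorem~\ref{thm4.1} gives unique ergodicity. The final assertion is then immediate: mean equicontinuity implies weak mean equicontinuity, so a transitive mean equicontinuous system falls under the case just treated.

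Regarding difficulty, there is no serious obstacle here; the argument is essentially a ``propagate from a dense orbit via the triangle inequality'' scheme. The only points demanding care are that one must invoke the \emph{uniform} form of $\overline{F}$-continuity (which rests on compactness) rather than the pointwise definition, because the approximating points $T^n z$ vary with $w$, and that one must apply the translation invariance of Proposition~\ref{P3.2} to kill the term $\overline{F}(z, T^n z)$ before passing to the limit.
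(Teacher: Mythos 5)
Your proof is correct and follows essentially the same route as the paper: propagate $\overline{F}(x_0,\cdot)=0$ from a dense orbit using the invariance in Proposition~\ref{P3.2} together with the triangle inequality of Proposition~\ref{P3.1}, conclude $N(F)=X\times X$, and invoke Theorem~\ref{thm4.1}. The only cosmetic difference is that the paper applies Proposition~\ref{P3.2} directly in the form $\overline{F}(x,y)=\overline{F}(T^{m_r}x,y)$ and therefore needs only pointwise $\overline{F}$-continuity at the target point, whereas you route through $\overline{F}(z,T^nz)=0$ and appeal to the uniform version --- both are valid.
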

        \begin{proof}
        	Let $x \in X$ be a transitive point of $(X,T)$. Then for any $y \in X$, there is a subsequence $\{m_r \}_{r=1}^{+\infty}$ of positive integers $\mathbb{N^{+}}$ such that $\lim\limits_{r \to +\infty} T^{m_r} x = y$. Since $(X,T)$ is weak mean equicontinuous, we deduce that $\lim\limits_{r \to +\infty} \overline{F}(T^{m_r} x, y) = 0$. By Proposition \ref{P3.2}, we have that for any $r \ge 1$, $\overline{F}(x,y) = \overline{F}(T^{m_r} x,y)$. Thus, $\overline{F}(x,y) = 0$. 
        	
        	 Given $y_1, y_2 \in X$. By Proposition \ref{P3.1}, we have
        	 \begin{equation*}
        	 \overline{F}(y_1,y_2) \le \overline{F}(y_1, x) + \overline{F}(x, y_2) = 0,
        	 \end{equation*}
        	 which shows $(y_1,y_2) \in N(F)$. Thus $N(F)=X\times X$. By Theorem \ref{thm5.1}, we derive that $(X, T)$ is uniquely ergodic.
        \end{proof}
    
        Combining Theorem \ref{thm5.1} with Proposition~\ref{p5.2}, we can show a new characterization of unique ergodicity by $N(\underline{F})$. 
        \begin{theorem}\label{thm5.3}
        	Let $(X,T)$ be a t.d.s. Then $(X,T)$ is uniquely ergodic if and only if $N(\underline{F})=X \times X$.
        \end{theorem}
        \begin{proof}
        	Assume that $(X,T)$ is uniquely ergodic. By Theorem \ref{thm5.1}, we have $N(F) = X \times X$. Since $N(F) \subset N(\underline{F})$, we derive that $N(\underline{F}) = X \times X$.
        	
        	Conversely, we assume that $N(\underline{F}) = X \times X$. Let $\mu_1$ and $\mu_2$ be ergodic measures of $(X,T)$. By Birkhoff pointwise ergodic theorem, there are $x,y \in X$ such that $M_x = \{\mu_1\}$ and $M_y= \{\mu_2\}$. Since $N(\underline{F})= X \times X$, we have $\underline{F}(x,y)=0$. By Proposition \ref{p5.2}, we deduce that $M_x \cap M_y \not=\O$, which implies $\mu_1=\mu_2$. Thus, $(X,T)$ is uniquely ergodic. 
        \end{proof}
      
        Proposition \ref{p5.1} shows that $N(F)$ is a subset of all point pairs in $X$ which can generate the same measure set. Combining Proposition \ref{p5.3} with the fact that $Q$ is a set of invariant measure one, it is reasonable to regard $N(F)$ as the whole set of all point pairs in $X$ which can generate the same measure set in the view of measure theory. Thus, there are close connections between $N(F)$ and invariant measures. Similarly, there are close connections between $N(\underline{F})$ and invariant measures. We state some of them as follows.
        \begin{theorem}\label{thm5.4}
        	Let $(X,T)$ be a t.d.s. and $\mu \in M(X,T)$. Then the following statements are equivalent:
        	\begin{itemize}
        		\item [(\emph{1})]
        		$\mu$ is ergodic;
        		\item [(\emph{2})]
        		$(\mu \times \mu) \big(N(F) \big)= 1$;
        		\item [(\emph{3})]
        		$(\mu \times \mu) \big(N(\underline{F}) \big)= 1$.
        	\end{itemize}
        \end{theorem}
        \begin{proof}
        	(3) $\Rightarrow$ (2) Since $Q$ has invariant measure one, we have $(\mu \times \mu) (Q \times Q) =1$. Thus, $(\mu \times \mu) \big(N(\underline{F}) \bigcap (Q \times Q) \big) =1$. By Theorem~\ref{1.2}, we have $N(F)\bigcap (Q \times Q) = N(\underline{F}) \bigcap  (Q \times Q)$. Hence, $(\mu \times \mu) \big(N(F) \bigcap (Q \times Q) \big) =1$. This shows $(\mu \times \mu) \big(N(F) \big)= 1$.

        	(2) $\Rightarrow$ (1) Since
        	\[
        	(\mu \times \mu) \big(N(F) \big) =\int_X \mu \big(N(F,x) \big) \mathrm {d}{\mu(x)},
        	\]
        	there is $x_0 \in X$ such that $\mu \big(N(F,x_0) \big)=1$. 
        	For $Q_T$ is a Borel set of invariant measure one, there is $y \in Q_T \bigcap N(F,x_0)$. Let $\mu_y= \lim\limits_{n \to +\infty} \frac 1n \sum\limits_{k=1}^n \delta_{T^k y}$. Then by Proposition~\ref{p5.3}, we can derive that $M_z = M_y =\{\mu_y \}$ for any $z \in N(F,x_0)$. Thus given $f \in C(X)$, we have  
        	\begin{equation*}
        	\lim\limits_{n \to +\infty} \frac 1n \sum\limits_{k=1}^n f(T^k z) = \int_{X} f \mathrm {d}{\mu_y}
        	\end{equation*}
        	holds for any $z \in N(F,x_0)$.
            Hence, we deduce that
        	\begin{equation*}
        	\begin{split}	
        	\int_{X} f \mathrm {d}{\mu}& = \lim_{n \to +\infty} \int_{N(F,x_0)} \frac 1n \sum_{k=1}^n f(T^k z) \mathrm {d}{\mu(z)}\\
        	&=  \int_{N(F,x_0)} \lim_{n \to +\infty} \frac 1n \sum_{k=1}^n f(T^k z) \mathrm {d}{\mu(z)}\\
        	&=\int_{X} f \mathrm {d}{\mu_y}.
        	\end{split}
        	\end{equation*}
        	This shows $\mu =\mu_y$. So $\mu$ is an ergodic measure.	
        	
        	(1) $\Rightarrow$ (3) By Birkhoff pointwise ergodic  theorem and Proposition~\ref{p5.3}, there exists a measurable subset $\Lambda$ of $X$ such that  $\mu(\Lambda)=1$ and $\Lambda \times \Lambda \subset N(F)$. Thus $(\mu \times \mu) \big(N(F) \big) = 1$. Since $N(F) \subset N(\underline{F})$, we derive that $(\mu \times \mu) \big( N(\underline{F}) \big) = 1$.
        \end{proof}
    
        \begin{theorem}\label{thm5.5}
        	Let $(X,T)$ be a t.d.s. and $m \in M(X)$. Then the following statements are equivalent:
        	\begin{itemize}
        		\item [(\emph{1})]
        		$(X,T)$ has physical measures with respect to $m$;
        		\item [(\emph{2})]
        		$(m\times m) \big(N(F)\bigcap (Q \times Q) \big)>0$;
        		\item [(\emph{3})]
        		$(m\times m) \big(N(\underline{F}) \bigcap (Q \times Q) \big)>0$.
        	\end{itemize}
        \end{theorem} 
        
        \begin{proof}
        	(2) $\Leftrightarrow$ (3) By Theorem~\ref{1.2}, we have 
        	\begin{equation*}
        	N(F)\bigcap (Q \times Q) = N(\underline{F}) \bigcap  (Q \times Q).
        	\end{equation*}
        	
        	(1) $\Rightarrow$ (2) Let $\mu$ be a physical measure of $(X,T)$ with respect to $m$. Then $m \big(B(\mu) \big)>0$. For any $x,y \in B(\mu)$, we have $M_x = M_y = \{\mu \}$. Thus $x,y \in Q$. By Proposition~\ref{p5.3}, we have $F(x,y)=0$, which shows $(x,y) \in N(F) \bigcap (Q\times Q)$. Thus, 
        	\begin{equation*}
        	B(\mu) \times B(\mu) \subset N(F) \bigcap (Q\times Q). 
        	\end{equation*}
        	Hence we have
        	\[
        	(m \times m) \big(N(F)\bigcap (Q\times Q) \big) \ge (m \times m) \big(B(\mu)\times B(\mu) \big) >0.
        	\]
        	
        	(2) $\Rightarrow$ (1)  There is $x_0 \in Q$ such that $m \big(N(F,x_0) \big)>0$. If not, for any $x \in Q$, we have $m \big(N(F,x) \big)=0$. Then we derive that
        	\[
        	(m \times m) \big(N(F)\bigcap (Q\times Q) \big) = \int_Q m \big(N(F,x) \big) \mathrm {d}{m(x)} =0,
        	\]
        	which is a contradiction.
        	
        	Let $\mu_{x_0}$ be the invariant measure generated by $x_0$. Then by Proposition~\ref{p5.3}, we have $B(\mu_{x_0})=N(F,x_0)$. Thus 
        	\begin{equation*}
        	m \big(B(\mu_{x_0}) \big)=m \big(N(F,x_0) \big) >0,
        	\end{equation*}
        	which shows that $\mu_{x_0}$ is a physical measure with respect to $m$.
        \end{proof}
        
        \section{Weak mean equicontinuity} \label{6}
        In this section, we study $\overline{F}$-continuity and $F$-continuity. Combining the following Proposition \ref{p6.1} with Theorem \ref{1.2}, we deduce that $\overline{F}$-continuity is equivalent to $F$-continuity. Then we provide the proof of Theorem \ref{1.5}.
        
        \begin{prop}\label{p6.1}
        	Let $(X,T)$ be an $\overline{F}$-continuous t.d.s. Then all the points in $X$ are generic points.
        \end{prop}
        \begin{proof}
        	Given $x \in X$. For any $y \in \overline{N(F,x)}$, there are $\{y_n\}_{n=1}^{\infty} \subset N(F,x)$ such that $\lim\limits_{n \to \infty} y_n = y$.
        	By Proposition \ref{P3.1}, we have 
        	\[
        	\overline{F}(x,y) \leq \overline{F}(x,y_n) + \overline{F}(y_n, y) = \overline{F}(y_n, y).
        	\]
        	Since $(X,T)$ is $\overline{F}$-continuous, we have $\lim\limits_{n \to +\infty} \overline{F}(y_n,y) = 0$. Thus we deduce that
        	\begin{equation*}
        	F(x,y)=\overline{F}(x,y) = 0, 
        	\end{equation*}
        	which implies $y \in N(F,x)$. Hence $N(F,x)$ is closed. By Proposition \ref{P3.3}, we know $N(F,x)$ is an invariant set. Then by Theorem \ref{1.3}, we derive that $ \big(N(F,x),T \big)$ is uniquely ergodic, which implies that all the points in $N(F,x)$ are generic. In particular, $x$ is a generic point.
        \end{proof}
        Since an $F$-continuous t.d.s is $\overline{F}$-continuous, the following is a direct corollary of Proposition \ref{p6.1}
        \begin{prop}
        	Let $(X,T)$ be an $F$-continuous t.d.s. Then all the points in $X$ are generic points.
        \end{prop}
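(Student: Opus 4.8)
The plan is to deduce this statement directly from Proposition~\ref{Theorem5}, which already asserts that every point of an $\overline{F}$-continuous system is generic. Thus the only thing I need to verify is that $F$-continuity of $(X,T)$ implies $\overline{F}$-continuity of $(X,T)$; once that implication is in hand, the conclusion is immediate.

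To establish the implication, I would argue at the level of the pointwise notions and then pass to the global ones. Fix $x \in C(F)$ and let $\varepsilon > 0$. By the definition of $F$-continuity at $x$, there is a $\delta > 0$ such that whenever $d(x,y) < \delta$, the quantity $F(x,y)$ exists and $F(x,y) < \varepsilon$. The key observation is that when $F(x,y)$ exists one has $\overline{F}(x,y) = \underline{F}(x,y) = F(x,y)$ by the very definition of $F(x,y)$ recalled in the introduction. Hence the same $\delta$ witnesses $\overline{F}(x,y) = F(x,y) < \varepsilon$ whenever $d(x,y) < \delta$, so $x \in C(\overline{F})$. Therefore $C(F) \subset C(\overline{F})$, and in particular $C(F) = X$ forces $C(\overline{F}) = X$, i.e. $(X,T)$ is $\overline{F}$-continuous.

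With the implication secured, Proposition~\ref{Theorem5} applies verbatim and yields that all points of $X$ are generic, completing the proof. I do not anticipate any genuine obstacle here: all the substantive work — building a closed invariant set $N(F,x)$ and invoking Theorem~\ref{1.3} to force unique ergodicity, hence genericity — is carried out in Proposition~\ref{Theorem5}, and the present statement is merely the observation that the stronger hypothesis of $F$-continuity subsumes the hypothesis of $\overline{F}$-continuity used there.
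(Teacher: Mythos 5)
Your proposal is correct and is exactly the paper's route: the paper states this as a direct corollary of Proposition~\ref{Theorem5}, justified by the observation that $F$-continuity implies $\overline{F}$-continuity since $\overline{F}(x,y)=F(x,y)$ whenever the latter exists. You have merely written out that one-line implication in full, so there is nothing to add.
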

        
        By Theorem \ref{1.2} and Proposition~\ref{p6.1}, we can deduce the following theorem.
        \begin{theorem}
        	Let $(X,T)$ be a t.d.s. Then $(X,T)$ is $\overline{F}$-continuous if and only if $(X,T)$ is $F$-continuous.
        \end{theorem}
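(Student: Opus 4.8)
The plan is to prove the two implications separately, observing that the forward direction is immediate from the definitions while the reverse direction rests entirely on the two structural results already established. Throughout I use the basic fact that $F(x,y)$ exists precisely when $\overline{F}(x,y) = \underline{F}(x,y)$, in which case all three quantities agree.

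For the implication that $F$-continuity implies $\overline{F}$-continuity, I would argue directly from the definitions. Fix $x \in X$ and $\varepsilon > 0$. By $F$-continuity at $x$ there is a $\delta > 0$ such that $d(x,y) < \delta$ forces $F(x,y)$ to exist and satisfy $F(x,y) < \varepsilon$. But whenever $F(x,y)$ exists we have $\overline{F}(x,y) = \underline{F}(x,y) = F(x,y)$, so the same $\delta$ yields $\overline{F}(x,y) < \varepsilon$. Hence $x \in C(\overline{F})$, and since $x$ was arbitrary, $C(F) = X$ gives $C(\overline{F}) = X$.

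For the converse, suppose $(X,T)$ is $\overline{F}$-continuous. The key observation is that under this hypothesis every point of $X$ is generic: this is precisely the content of Proposition \ref{Theorem5}. Consequently, for any $x, y \in X$ both $x$ and $y$ are generic points, so Theorem \ref{1.2} guarantees that $F(x,y)$ exists; equivalently, $\overline{F}(x,y) = \underline{F}(x,y) = F(x,y)$ holds identically on $X \times X$. With this identification in hand I would fix $x \in X$ and $\varepsilon > 0$, apply the $\overline{F}$-continuity at $x$ to obtain $\delta > 0$ with $d(x,y) < \delta \Rightarrow \overline{F}(x,y) < \varepsilon$, and then simply read off that $F(x,y)$ exists and equals $\overline{F}(x,y) < \varepsilon$. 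This is exactly $F$-continuity at $x$, so $C(\overline{F}) = X$ yields $C(F) = X$.

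There is no genuine obstacle remaining at this level, since all the difficulty has been absorbed into the two cited results. The substantive work is Proposition \ref{Theorem5}, whose proof shows that $N(F,x)$ is a closed invariant set on which the restricted system is uniquely ergodic (via Theorem \ref{1.3}), forcing genericity of every point; and Theorem \ref{1.2}, which upgrades the genericity of a pair $(x,y)$ to the existence of the limit defining $F(x,y)$. Once these are granted, the theorem is the formal statement that $\overline{F}$ and $F$ coincide everywhere in an $\overline{F}$-continuous system, so the two continuity notions are literally the same condition.
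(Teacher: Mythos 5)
Your proposal is correct and follows essentially the same route as the paper: the forward direction is read off from the definitions, and the converse combines Proposition \ref{Theorem5} (every point of an $\overline{F}$-continuous system is generic) with Theorem \ref{1.2} (existence of $F(x,y)$ for generic pairs). The only difference is that you spell out the easy direction, which the paper dismisses in one line.
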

        \begin{proof}
        	We need only to prove that $\overline{F}$-continuity implies $F$-continuity. Suppose that $(X,T)$ is $\overline{F}$-continuous. By Proposition~\ref{p6.1}, we derive that all the points in $X$ are generic points. Then by Theorem \ref{1.2}, we have $F(x,y)$ exists for any $x,y \in X$. Thus $(X,T)$ is $F$-continuous.
        \end{proof}
        
        Next, we give the proof of Theorem~\ref{1.5}.
        \begin{proof}[\bf Proof of Theorem~\ref{1.5}]
        	Assume that $(X,T)$ is weak mean equicontinuous, then we will prove $f^*$ is continuous for any $f \in C(X)$.
        	
        	Given $f \in C(X)$, by Proposition~\ref{p6.1} we know $f^* (x)$ exists for any $x \in X$. Fix $\upsilon > 0$, then there is $\varepsilon > 0$ such that whenever $x,y \in X$ with $d(x,y) < \varepsilon$, we have
        	\begin{equation}\label{eq6.1}
        	|f(x) - f(y)| < \frac{\upsilon}{2(2L+1)},
        	\end{equation}
        	where $L=\max\limits_{x \in X} \{|f(x)|\}$.
        	
        	Given $x,y \in X$. For any $\sigma \in S_n$, we have
        	\begin{equation*}
        	\begin{split}
        	&\hskip 0.5cm |\frac 1n \sum_{k=1}^n f(T^k x) - \frac 1n \sum_{k=1}^n f(T^k y)| \\
        	&=\frac 1n |\sum\limits_{k=1}^n \big(f(T^k x) - f(T^{\sigma(k)} y) \big)|  \leq \frac 1n \sum_{k=1}^n |f(T^k x - f(T^{\sigma(k)} y)|\\
        	&\le \frac{\upsilon}{2(2L+1)} \times \frac {\# \big(\{k \in \mathbb{N^{+}}|d(T^k x,T^{\sigma(k)} y) < \varepsilon, k\le n\} \big)}n \ \ \big({\text by \ \eqref{eq6.1}} \big) \\
        	& \hskip 0.5cm + 2L \times \frac {\# \big(\{k \in \mathbb{N^{+}}|d(T^k x,T^{\sigma(k)} y) \geq \varepsilon, k\le n\} \big)}n \\
        	&\le \frac{\upsilon}{2(2L+1)} + 2L \times \frac {\# \big(\{k \in \mathbb{N^{+}}|d(T^k x,T^{\sigma(k)} y) \geq \varepsilon, k\le n\} \big)}n.
        	\end{split}
        	\end{equation*}
        	Since 
        	\[
        	\frac 1n \sum_{k=1}^n d(T^k x,T^{\sigma(k)} y) \geq \varepsilon \times \frac {\# \big(\{k \in \mathbb{N^{+}}|d(T^k x,T^{\sigma(k)} y) \geq \varepsilon, k\le n\} \big)}n,
        	\]
        	we deduce that
        	\begin{equation*}
        	|\frac 1n \sum_{k=1}^n f(T^k x) - \frac 1n \sum_{k=1}^n f(T^k y)| 
        	\le \frac{\upsilon}{2(2L+1)} + \frac {2L}{\varepsilon} \times \frac 1n \sum_{k=1}^n d(T^k x,T^{\sigma(k)} y).
        	\end{equation*}
        	Thus we have
        	\begin{equation*}
        	|\frac 1n \sum_{k=1}^n f(T^k x) - \frac 1n \sum_{k=1}^n f(T^k y)| 
        	\le \frac{\upsilon}{2(2L+1)} + \frac {2L}{\varepsilon} \times \inf_{\sigma \in S_n} \frac 1n \sum_{k=1}^n d(T^k x,T^{\sigma(k)} y).
        	\end{equation*}
        	Let $n \to +\infty$, then we have 
        	\begin{equation*}
        	\limsup_{n \to +\infty} |\frac 1n \sum_{k=1}^n f(T^k x) - \frac 1n \sum_{k=1}^n f(T^k y)| 
        	\le \frac{\upsilon}{2(2L+1)} + \frac {2L}{\varepsilon} \times \overline{F}(x,y).
        	\end{equation*}
        	This implies
        	\begin{equation}\label{eq6.2}
        	|f^* (x) - f^* (y) | \le \frac{\upsilon}{2(2L+1)} + \frac {2L}{\varepsilon} \times \overline{F}(x,y).
        	\end{equation}
        	
        	Since $(X,T)$ is weak mean equicontinuous, there is $\delta > 0$ such that whenever $x,y \in X$ with $d(x,y) < \delta$, we have
        	\begin{equation*}
        	\overline{F}(x,y) < \frac{\varepsilon \upsilon}{2(2L+1)}.
        	\end{equation*}
        	Combining this with \eqref{eq6.2}, we deduce that
        	\begin{equation*}
        	|f^* (x) - f^* (y) |
        	\le \frac{\upsilon}{2(2L+1)} + \frac {2L}{\varepsilon} \times \frac{\varepsilon \upsilon}{2(2L+1)} = \frac {\upsilon}2 < \upsilon
        	\end{equation*}
        	whenever $x, y \in X$ with $d(x,y) < \delta$,
        	which implies $f^{*}(x) \in C(X)$.
        	\medskip
        	
        	Conversely, we will prove $(X,T)$ is weak mean equicontinuous with the assumption that $f^*$ is continuous for any $f \in C(X)$. 
        	
        	If $(X,T)$ is not weak mean equicontinuous, there are  $x \in X$, $\varepsilon > 0$ and $\{x_m\}_{m=1}^{\infty} \subset X$ such that $\lim\limits_{m \to +\infty}x_m= x$ but $\overline{F}(x,x_m) \geq \varepsilon$. With the assumption, we know $x$ and $\{x_m \}_{m=1}^{\infty}$ are generic points.

        	Let 
        	$\mu = \lim\limits_{n \to + \infty} \frac 1n \sum\limits_{k=1}^n \delta_{T^k x}$, $M=diam(X), \eta = \frac{\varepsilon}{2(M+5)}$. Then by Lemma~\ref{measure lemma}, there exist finite mutually disjoint closed subsets $\{\Lambda_s\}_{s=1}^{s_0}$ of $X$ such that
        	\[
        	\mu(\bigcup_{s=1}^{s_0}\Lambda_s) \textgreater 1 - \eta \ \ \text{and} \ \ diam(\Lambda_s) \textless \eta,  \ {\forall} s = 1,2,\cdots,s_0.
        	\]
        	Take $\delta = \min\limits_{s_1 \not= s_2}\{d(\Lambda_{s_1},\Lambda_{s_2})\}$, $r=\min\{\frac{\delta}5,\eta\}$ and $\alpha =\frac{\varepsilon}{4Ms_0 + 1}$. For any $s \in \{1,2,\cdots,s_0 \}$, let $U_s = \{y \in X : d(y, \Lambda_s) <r \}$ and $V_s = \{y \in X: d(y, \Lambda_s) < 2r \}$. Then $\{U_s \}_{s=1}^{s_0}$ and $\{V_s \}_{s=1}^{s_0}$ are  mutually disjoint open subsets of $X$ and $diam(V_s) \le 5 \eta$ for any $s \in \{1,2,\cdots,s_0 \}$. We have the following claim:
        	
        	\medskip
        	\noindent {\bf Claim 2.}
        	For any $x_m \in X$, there is $s_m \in \{1,2,\cdots,s_0\}$ such that
        	\[
        	\liminf_{n \to +\infty} \frac 1n \sum_{k=1}^n \chi_{U_{s_m}}(T^k x) > \liminf_{n \to +\infty} \frac 1n \sum_{k=1}^n \chi_{V_{s_m}}(T^k x_m)  + \alpha.
        	\]
        	
        	\begin{proof}[Proof of Cliam 2]
        		If not, there is $x_m \in X$ such that for any $s \in \{1,2,\cdots,s_0 \}$, we have
        		\begin{equation} \label{eq6.3}
        		\liminf_{n \to +\infty} \frac 1n \sum_{k=1}^n \chi_{U_{s}}(T^k x) \le  \liminf_{n \to +\infty} \frac 1n \sum_{k=1}^n \chi_{V_{s}}(T^k x_m)  + \alpha.
        		\end{equation}
        		Given $s \in \{1,2,\cdots,s_0 \}$. By Lemma \ref{lemma2.1}, we have 
        		\begin{equation}\label{eq6.4}
        		\liminf_{n \to +\infty} \frac 1n \sum_{k=1}^n \chi_{U_{s}}(T^k x) \ge \mu(U_s).
        		\end{equation}
        		Then combining \eqref{eq6.3} with \eqref{eq6.4}, we deduce that
        		\begin{equation*}
        		\begin{split}
        		\liminf_{n \to +\infty} \frac 1n \sum_{k=1}^n \chi_{V_{s}}(T^k x_m) 
        		&\ge \liminf_{n \to +\infty} \frac 1n \sum_{k=1}^n \chi_{U_{s}}(T^k x) - \alpha \\
        		& \ge \mu(U_s) - \alpha. 
        		\end{split}
        		\end{equation*}
        		Since $U_s \subset V_s$, we have
        		\begin{equation*}
        		\liminf_{n \to +\infty} \frac 1n \sum_{k=1}^n \chi_{V_{s}}(T^k x) \ge \liminf_{n \to +\infty} \frac 1n \sum_{k=1}^n \chi_{U_{s}}(T^k x) \ge \mu(U_s).
        		\end{equation*}
        		Then by Proposition \ref{P3.4}, we derive that
        		\begin{equation*}
        		\begin{split}
        		\overline{F}(x, x_m) 
        		&\le 5 \eta \sum\limits_{s=1}^{s_0} \big(\mu(U_s) - \alpha \big) + M \Big(1- \sum\limits_{s=1}^{s_0} \big(\mu(U_s) - \alpha \big) \Big)\\
        		& \le 5 \eta + M \eta + M s_0 \alpha \le \frac {3 \varepsilon}4,
        		\end{split}
        		\end{equation*}
        		which is a contradiction. This finishes the proof of Claim 2.
        	\end{proof}
        	
        	By Claim 2, there is $s_{m_0} \in \{1,2,\cdots,s_0\}$ and a subsequence $\{x_{m_p}\}_{p=1}^{\infty}$ of $\{x_m\}_{m=1}^{\infty}$ such that for any $p \in \mathbb{N}^+$, we have
        	\begin{equation}\label{eq6.5}
        	\liminf_{n \to +\infty} \frac 1n \sum_{k=1}^n \chi_{U_{s_{m_0}}}(T^k x) > \liminf_{n \to +\infty} \frac 1n \sum_{k=1}^n \chi_{V_{s_{m_0}}}(T^k x_{m_p})  + \alpha.
        	\end{equation}
        	Take $f \in C(X)$ such that $0\le f \le 1$ and
        	\[
        	f|_{\overline{U}_{s_{m_0}}} = 1, \ \ f|_{V_{s_{m_0}}^c} = 0.
        	\]
        	Then we derive that
        	\[
        	\liminf_{n \to +\infty} \frac 1n \sum_{k=1}^n \chi_{U_{s_{m_0}}}(T^k x)  \le \liminf_{n \to +\infty} \frac 1n \sum_{k=1}^n f(T^k x) = f^{*}(x)
        	\]
        	and
        	\[
        	\liminf_{n \to +\infty} \frac 1n \sum_{k=1}^n \chi_{V_{s_{m_0}}}(T^k x_{m_p})  \ge \liminf_{n \to +\infty} \frac 1n \sum_{k=1}^n f(T^k x_{m_p}) = f^{*}(x_{m_p}).
        	\]
        	Thus by \eqref{eq6.5}, we deduce that
        	\[
        	f^{*}(x) \ge  f^{*}(x_{m_p}) + \alpha,
        	\]
        	which implies $f^{*}(x) \notin C(X)$. This is a contradiction. Hence $(X,T)$ is weak mean equicontinuous.	
        \end{proof}

\end{document}